\documentclass[a4paper,reqno]{amsart}
\usepackage{amssymb, amsmath, amscd}
\usepackage[final]{graphicx}
\usepackage{float}\usepackage{wrapfig}
\usepackage{color}
\usepackage{bbm}
\usepackage[final]{graphicx}
\usepackage{tikz-cd}
\usepackage{color}

\def\black{\color{black}}
\definecolor{brown(traditional)}{rgb}{0.59, 0.29, 0.0}

\def\Tr{\operatorname{Tr}}\def\Hess{\operatorname{Hess}}
\def\rr{{\mathfrak{r}}}

\newtheorem{theorem}{Theorem}[section]

\newtheorem{lemma}[theorem]{Lemma}
\newtheorem{remark}[theorem]{Remark}

\makeatletter\makeatother\makeatletter
 \@addtoreset{equation}{section}
\makeatother
\begin{document}

\title{Centrally Harmonic spaces}
\author{P. B. Gilkey and J. H. Park}
\address{PG: Mathematics Department, University of Oregon, Eugene OR 97403-1222, USA}
\email{gilkey@uoregon.edu}
\address{JHP: Department of
Mathematics, Sungkyunkwan University, Suwon, 16419 Korea.}
\email{parkj@skku.edu}
\subjclass[2010]{53C21}
\begin{abstract}We construct examples of centrally harmonic spaces by generalizing work of Copson and Ruse. We
show that these examples are generically not centrally harmonic at other points. We use this construction to exhibit manifolds
which are not conformally flat but such that their density function agrees with Euclidean space.
\end{abstract}

\subjclass[2010]{53C21}
\keywords{harmonic spaces, density function, centrally harmonic space, Damek-Ricci space}

\maketitle
\section{Introduction}
\subsection{Notational conventions}
Any 2-dimensional manifold is Einstein; thus this condition imposes no additional
restrictions and the case $m=2$ is often exceptional.
We shall therefore sometimes assume
that $m\ge3$ to simplify the analysis. If $\vec\xi=(\xi^1,\dots,\xi^m)\in\mathbb{R}^m$, set:
$$\begin{array}{ll}
\|\xi\|^2:=(\xi^1)^2+\dots+(\xi^m)^2,&d\xi:=d\xi^1\dots d\xi^m,\\[0.05in]
g_e:=(d\xi^1)^2+\dots+(d\xi^m)^2,&\Delta_e^0:=-\partial_{\xi^1}^2-\dots-\partial_{\xi^m}^2,\\
S_0(r):=\{\xi:\|\xi\|=r\}.
\end{array}$$
There is a radial solution to the equation $\Delta^0_ef = 0$ for $\|\xi\|>0$ given by
$$
f(\xi):=\left\{\begin{array}{lll}
\log \|\xi\|^2&\text{if}&m=2\\[0.05in]
\|\xi\|^{2-m}&\text{if}&m > 2
\end{array}\right\}\,.
$$
Ruse \cite{R31} was the first to examine radial solutions to the Laplace equation in the { more general} context
of a Riemannian manifold $\mathcal{M}=(M,g)$ of dimension $m\ge2$.
{ Let ${\Delta_{\mathcal{M}}^0}$ (resp. ${\Delta_{\mathcal{M}}^1}$) be the Laplace-Beltrami
operator on functions (resp. 1-forms)}.
Let $r_P(Q)$ be the geodesic distance from a point $P$ to another point $Q$ of $M$. A function $ f$ is said to be \emph{radial} if {$f(Q)=\check f(r_P(Q))$ for some function $\check f$ of a single variable; in
the interests of notational simplification, we shall identify $f$ with $\check f$ when no confusion
is likely to result}. Let $\iota_P$ be the injectivity
radius. If there exists a non-constant radial function so that
${{\Delta_{\mathcal{M}}^0}} f=0$ for $0<r<\iota_P$, then $\mathcal{M}$ is said to be
\emph{centrally harmonic about} $P$. If $\mathcal{M}$ is
{centrally}  harmonic {about} every point, then $\mathcal{M}$ is said to be
a \emph{harmonic space} (see Willmore~\cite{W}).

Much of the subsequent work in the field has focussed on harmonic spaces. But in this note, we will
go back to the original question and study spaces which are centrally harmonic {about a point} $P$.
There are a number of useful characterizations of this property.
Let $(\xi^1,\dots,\xi^m)$ be geodesic coordinates
centered at a point $P$ of $M$. Such coordinate systems are characterized by the fact that the curves
$t\rightarrow t\xi$ are unit speed geodesics from $P$ if $\|\xi\|=1$ and hence $r_P(\xi)=\|\xi\|$ if
$\|\xi\|<\iota_P$.  {The} Riemannian measure
defined by $g$ {is} $\tilde\Theta_Pd\xi$ where
$\tilde\Theta_P:=\sqrt{\det{g_{ij}}}$ is the associated {\it volume density function}.
Let $S_P(r):=\{\xi\in T_PM:\|\xi\|=r\}$ be the geodesic sphere of radius $r$ centered at $P$ and
let $(r,\theta)\rightarrow r\cdot\theta$ define geodesic polar coordinates where $\theta\in S_P(1)$ and $0<r<\iota_P$.
If $d\theta$ is the Euclidean volume element of $ S_P(1)$, then $d\xi=r^{m-1}drd\theta$ so
the volume density in geodesic polar
coordinates is given by $\Theta_P:=r^{m-1}\tilde\Theta_P$.
Let
$$\Xi_P:=\partial_r\log(\Theta_P(\xi))\,;
$$
$\Xi_P$ is
the mean curvature of the geodesic sphere $S_P(\|\xi\|)$ at $\xi\in T_PM$.
For $\lambda\in\mathbb{C}$, let $\mathfrak{E}_P^0(\lambda)$ (resp. $\mathfrak{E}_P^1(\lambda)$) be the
eigen-space of radial functions (resp. 1-forms) defined by $\lambda$, i.e.,
\begin{eqnarray*}
&&\mathfrak{E}_P^0(\lambda):=\left\{\phi^0\in C^\infty(0,\iota_P):
{\Delta_{\mathcal{M}}^0}(\phi^0(r))=\lambda\phi^0(r)\right\},\\
&&\mathfrak{E}_P^1(\lambda):=\left\{\phi^1dr\in C^\infty(0,\iota_P)dr:
{\Delta_{\mathcal{M}}^1}(\phi^1(r)dr)=\lambda\phi^1(r)dr\right\}\,.
\end{eqnarray*}
{Note that we exclude the origin $r=0$; these functions are permitted to be singular at the center point $P$.}
If $\lambda\ne0$, then $d$ is an
isomorphism from $\mathfrak{E}_P^0(\lambda)$ to $\mathfrak{E}_P^1(\lambda)$ so it suffices to study
$\mathfrak{E}_P^0(\lambda)$ {in this instance}. Let $\mathfrak{H}_P^0$ (resp. $\mathfrak{H}_P^1$)
be the space of radial harmonic functions (resp. 1-forms), i.e., $\mathfrak{H}_P^0:=\mathfrak{E}_P^0(0)$ and
$\mathfrak{H}_P^1:=\mathfrak{E}_P^1(0)$.
\subsection{ Characterizations of centrally harmonic spaces} The following result was established by the authors
previously~\cite{GP20}.

\begin{theorem}\rm
The following assertions are equivalent and if any is satisfied, then $\mathcal{M}$ is
centrally harmonic about the point $P$. {{If they hold at every point, then $\mathcal{M}$ is said to be a harmonic space.}}
Let $\lambda\ne0$.
\smallbreak\noindent\centerline{\rm\begin{tabular}{lll}
(a) $\Theta_P$ is radial.&(b) $\tilde\Theta_P$ is radial.&(c) $\Xi_P$ is radial.\\
(d) $\dim\{\mathfrak{H}_P^0\}=2$.&(e) $\dim\{\mathfrak{H}_P^0\}\ge2$.&(f) $\dim\{{\mathfrak{H}_P^1}\}=2$.\\
(g) $\dim\{{\mathfrak{H}_P^1}\}\ge1.$&(h) $\dim\{\mathfrak{E}_P^0(\lambda)\}=2$.&
(i) $\dim\{\mathfrak{E}_P^0(\lambda)\}\ge1$.\\
(j) $\dim\{\mathfrak{E}_P^1(\lambda)\}=2$.&
(k) $\dim\{\mathfrak{E}_P^1(\lambda)\}\ge1$.&(l) $\Delta_{\mathcal{M}}^0 r$ is radial.
\end{tabular}}
\par\hspace{.6cm}  {\rm (m) Geodesic spheres about $P$ have
constant mean curvature.}
\end{theorem}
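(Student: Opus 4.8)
The engine for the entire equivalence is the expression for $\Delta_{\mathcal M}^0$ on radial objects in geodesic polar coordinates. Since $\partial_r$ is the unit normal to the geodesic spheres and is $g$-orthogonal to them by the Gauss Lemma, a radial function $f=f(r)$ satisfies
$$\Delta_{\mathcal M}^0 f=-\tfrac1{\Theta_P}\partial_r(\Theta_P\,\partial_r f)=-f''-\Xi_P f';$$
in particular $\Delta_{\mathcal M}^0 r=-\Xi_P$. I would take assertion (c), that $\Xi_P$ is radial, as the hub and prove every other assertion equivalent to it.

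The elementary equivalences come first. Because $\Theta_P=r^{m-1}\tilde\Theta_P$ with $r^{m-1}>0$ radial for $r>0$, (a) and (b) coincide; and $\Xi_P=\partial_r\log\Theta_P$ shows at once that (a) implies (c), while integrating $\partial_r\log\Theta_P=\Xi_P(r)$ outward from $P$—using $\tilde\Theta_P\to1$ uniformly in the angular variable as $r\to0$ to fix the constant of integration—recovers (c) implies (a). The identity $\Delta_{\mathcal M}^0 r=-\Xi_P$ gives (c) $\Leftrightarrow$ (l) immediately, and since $\Xi_P(r\theta)$ is by definition the mean curvature of $S_P(r)$ at $r\theta$, radiality of $\Xi_P$ is exactly the assertion that each geodesic sphere has constant mean curvature, i.e. (c) $\Leftrightarrow$ (m).

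Granting (c), the eigen-equation $-f''-\Xi_P(r)f'=\lambda f$ is a genuine second order linear ODE on $(0,\iota_P)$ with smooth coefficients, so $\dim\mathfrak E_P^0(\lambda)=2$; specializing $\lambda=0$ gives $\dim\mathfrak H_P^0=2$ with explicit basis $\{1,\ \int^{r}\Theta_P^{-1}\,ds\}$. This yields (d) and (h), and the stated $d$-isomorphism $\mathfrak E_P^0(\lambda)\cong\mathfrak E_P^1(\lambda)$ for $\lambda\ne0$ transports (h) to (j). For the harmonic $1$-forms I would write a radial $\omega=\phi\,dr$ as $\omega=dU$ with $U'=\phi$ and use $\Delta_{\mathcal M}^1 dU=d\,\Delta_{\mathcal M}^0 U$, so that $\omega$ is harmonic precisely when $\Delta_{\mathcal M}^0 U$ is constant; under (c) the first order problem $-\phi'-\Xi_P\phi=\mathrm{const}$ again has a two dimensional solution space, giving (f). The weakened assertions (e), (g), (i), (k) then follow trivially from their $=2$ counterparts.

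The substance of the theorem is the converse direction, that each existence statement (e), (g), (i), (k) forces (c); the apparent difficulty is that $\Delta_{\mathcal M}^0 f=-f''-\Xi_P f'$ only pins down $\Xi_P(r\theta)$ at radii where the relevant radial derivative is nonzero. The key device is to fix an angular direction $\theta$ and read the eigen-equation along the single ray $r\mapsto r\theta$: there $\Xi_P(r\theta)$ is merely a smooth function of $r$, so the equation becomes a linear ODE in $r$ to which uniqueness applies. For (e), a non-constant radial harmonic $f$ has $g:=f'$ solving $g'=-\Xi_P(\cdot\,\theta)g$ along each ray, so the radial quantity $f'$ vanishes at one radius only if it vanishes identically; being non-constant it never vanishes, and $\Xi_P=-f''/f'$ is radial. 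For (i) with $\lambda\ne0$ the same uniqueness shows $f'$ cannot vanish on any interval—there $f$ is constant, forcing $f\equiv0$—so $\{f'\ne0\}$ is dense and $\Xi_P$ is radial by continuity; (g) and (k) are handled identically after reducing to $\phi$ via $\omega=\phi\,dr$, and for $\lambda\ne0$ (k) may instead be pushed back through the $d$-isomorphism. I expect this ray-restriction-plus-uniqueness step to be the main obstacle, precisely because it is what converts the pointwise, direction-dependent information in the Laplacian into the global radial symmetry of $\Xi_P$.
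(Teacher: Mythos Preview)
The paper does not actually prove this theorem: it is quoted verbatim from the authors' earlier work and accompanied only by the sentence ``The following result was established by the authors previously~\cite{GP20}.'' There is therefore no proof in the present paper to compare your proposal against.

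That said, your argument is sound and is essentially the standard one. The polar-coordinate identity $\Delta_{\mathcal M}^0 f=-f''-\Xi_P f'$ and the consequence $\Delta_{\mathcal M}^0 r=-\Xi_P$ are exactly the right engine; the elementary equivalences (a)$\Leftrightarrow$(b)$\Leftrightarrow$(c)$\Leftrightarrow$(l)$\Leftrightarrow$(m) are handled correctly, and the forward implications (c)$\Rightarrow$(d),(f),(h),(j) via ODE existence are fine. Your ``ray-restriction plus uniqueness'' device for the converse directions is also correct: for (e), the first-order equation $g'+\Xi_P(\cdot\,\theta)g=0$ along each ray forces a non-constant radial harmonic $f$ to have $f'$ nowhere zero, whence $\Xi_P=-f''/f'$; for (i) with $\lambda\neq0$, if $f'$ vanished on an interval then $f$ would be constant there, hence zero there by $\lambda\neq0$, hence identically zero along each ray by second-order uniqueness. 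One small point worth making explicit in (g): writing $\omega=\phi\,dr=dU$ and using $\Delta^1 dU=d\Delta^0U$ gives $-\phi'-\Xi_P\phi=c$ for some genuine constant $c$; if $c=0$ you are back in case (e), while if $c\neq0$ every zero of $\phi$ is simple (since $\phi'=-c$ there), so $\{\phi\neq0\}$ is open and dense and continuity finishes. Your sketch already contains this, but spelling out the $c=0$ versus $c\neq0$ dichotomy removes any ambiguity.
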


\subsection{Asymptotic expansion of the volume density function in geodesic coordinates} If $\mathcal{M}$ a
Riemannian manifold, then we can expand
\begin{equation}\label{E1.a}
\Theta_P(\xi)\sim\|\xi\|^{m-1}\left(1+\sum_{k=2}^\infty{{\mathcal{H}_k(\xi)}}\right)
\end{equation}
in a power series about the origin where
$\mathcal{H}_k(c\xi)=c^k\mathcal{H}_k(\xi)$ for $c\in\mathbb{R}$;
{we omit the dependence on the point $P$ in the interests of notational simplification}. Let {$R$ be the curvature
tensor of $M$ and let} $\mathcal{J}_k(\xi)$ be the
endomorphism of $T_PM$ defined by the identity
$$
g(\mathcal{J}_k(\xi)\eta_1,\eta_2)={(\nabla^kR)}(\eta_1,\xi,\xi,\eta_2;\xi,\dots,\xi)\,;
$$
$\mathcal{J}_0(\xi)$ is the Jacobi operator and $\mathcal{J}_k(\xi)={(\nabla_\xi^k\mathcal{J}_0)}(\xi)$.
We have, see for example the discussion on page 229 of \cite{W}, that
\begin{equation}\label {E1.c}\begin{array}{ll}
\mathcal{H}_2(\xi)=-\frac{\Tr\{\mathcal{J}(\xi)\}}{6},\\[0.05in]
\mathcal{H}_3(\xi)=-\frac{\Tr\{\mathcal{J}_1(\xi)\}}{12},\\[0.05in]
\mathcal{H}_4(\xi)=\frac{\Tr\{\mathcal{J}(\xi)\}^2}{72}
-\frac{\Tr\{\mathcal{J}(\xi)^2\}}{180}-\frac{\Tr\{\mathcal{J}_2(\xi)\}}{40},\\[0.05in]
\mathcal{H}_5(\xi)=\frac{\Tr\{\mathcal{J}(\xi)\} \Tr\{\mathcal{J}_1(\xi)\}}{72}
 -\frac{\Tr\{\mathcal{J}(\xi)\mathcal{J}_1(\xi)\}}{180}-\frac{\Tr\{\mathcal{J}_3(\xi)\}}{180},\\
\mathcal{H}_6(\xi)=-\frac{\Tr\{\mathcal{J}(\xi)\}^3}{1296}
 +\frac{\Tr\{\mathcal{J}(\xi)\} \Tr\{\mathcal{J}(\xi)^2\}}{1080}+\frac{\Tr\{\mathcal{J}(\xi)\} \Tr\{\mathcal{J}_2(\xi)\}}{240}
 -\frac{\Tr\{\mathcal{J}(\xi)^3\}}{2835}\\\qquad\qquad
 -\frac{\Tr\{\mathcal{J}(\xi)\mathcal{J}_2(\xi)\}}{630}+\frac{\Tr\{\mathcal{J}_1(\xi)\}^2}{288}
 -\frac{\Tr\{\mathcal{J}_1(\xi)^2\}}{672}-\frac{\Tr\{\mathcal{J}_4(\xi)\}}{1008}.
 \end{array}\end{equation}
Formulas for $\mathcal{H}_7$ and $\mathcal{H}_8$ were derived in \cite{GP20a}.
More generally, one can show that
$$
\mathcal{H}_n(\xi)=c_n\operatorname{Tr}\{\mathcal{J}_{n-2}(\xi)\}+\text{lower order terms}
$$
In particular, $c_2=-\frac16$, $c_3=-\frac1{12}$, $c_4=-\frac1{40}$,
$c_5=-\frac1{180}$,  and $c_6=-\frac1{1008}$.
We will establish
the following result in Section~\ref{S2}; it provides a leading term analysis which will be crucial in what follows.
\begin{lemma}\label{L1.2}\rm
We have $\displaystyle c_n=-\frac{n-1}{(n+1)!}$.
\end{lemma}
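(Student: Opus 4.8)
The plan is to isolate the part of $\mathcal{H}_n$ that is \emph{linear} in the curvature, since the leading term $c_n\Tr\{\mathcal{J}_{n-2}(\xi)\}$ is precisely the unique contribution to $\mathcal{H}_n$ that is linear in the curvature tensor and carries the maximal number $n-2$ of covariant derivatives; every ``lower order'' term is at least quadratic in the curvature, as one already sees in the displayed formulas for $\mathcal{H}_2,\dots,\mathcal{H}_6$. It therefore suffices to compute $\Theta_P$ to first order in the curvature, and I would do this through the matrix Jacobi equation. Fixing a unit vector $\theta$ and writing $\xi=r\theta$, so that $r=\|\xi\|$, the volume density satisfies $\Theta_P(r\theta)=\det A(r)$, where $A(r)$ is the $(m-1)\times(m-1)$ matrix solution, in a parallel orthonormal frame of $\theta^\perp$, of
$$
A''(r)+\mathcal{R}(r)A(r)=0,\qquad A(0)=0,\quad A'(0)=I\,.
$$
Here $\mathcal{R}(r)$ is the Jacobi operator along $\gamma_\theta(r)=\exp_P(r\theta)$, parallel-transported back to $T_PM$. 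Taylor expanding in arc length and using that $\dot\gamma_\theta$ is parallel with $\dot\gamma_\theta(0)=\theta$ gives
$$
\mathcal{R}(r)=\sum_{k=0}^\infty\frac{r^k}{k!}\,\mathcal{J}_k(\theta),
$$
since $(\nabla_\theta^k\mathcal{J}_0)(\theta)=\mathcal{J}_k(\theta)$. Because $\mathcal{J}_k(\theta)\theta=0$, the trace of $\mathcal{J}_k(\theta)$ over $\theta^\perp$ equals its trace over all of $T_PM$, so there is no loss in passing to the reduced system.

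Next I would linearize. Writing $A=A_0+A_1+\cdots$ as a series in the curvature with flat solution $A_0(r)=rI$, the first order term solves $A_1''=-r\,\mathcal{R}(r)$ with $A_1(0)=A_1'(0)=0$, that is,
$$
A_1(r)=-\int_0^r\!\!\int_0^s t\,\mathcal{R}(t)\,dt\,ds\,.
$$
Since $\log\det A=(m-1)\log r+\tfrac1r\Tr\{A_1(r)\}+O(\mathrm{curv}^2)$, and, at linear order, the expansion $\log\bigl(1+\sum_{k\ge2}\mathcal{H}_k(\theta)r^k\bigr)$ reproduces $\sum_k\mathcal{H}_k(\theta)r^k$ (the remaining terms of $\log$ being products, hence quadratic or higher in curvature), the linear-in-curvature part of $\mathcal{H}_n(\theta)$ is exactly the coefficient of $r^n$ in $\tfrac1r\Tr\{A_1(r)\}$.

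Finally I would carry out the termwise integration. Substituting the expansion of $\mathcal{R}$ and integrating gives
$$
\tfrac1r\Tr\{A_1(r)\}=-\sum_{k=0}^\infty\frac{\Tr\{\mathcal{J}_k(\theta)\}}{k!}\,\frac{r^{k+2}}{(k+2)(k+3)}\,,
$$
so reading off $k=n-2$ yields $c_n=-\dfrac{1}{(n-2)!\,n(n+1)}$, which simplifies to $-\dfrac{n-1}{(n+1)!}$ upon writing $(n+1)!=(n+1)\,n\,(n-1)\,(n-2)!$. The step requiring genuine care is the bookkeeping of the first two paragraphs, namely the normalization $\Theta_P=\det A$ together with the exact Taylor coefficients $\tfrac{r^k}{k!}\mathcal{J}_k(\theta)$ of the Jacobi operator and the reduction to linear order; the final factorial identity and the integration are routine. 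As a consistency check, the formula reproduces $c_2=-\tfrac16$, $c_3=-\tfrac1{12}$, $c_4=-\tfrac1{40}$, $c_5=-\tfrac1{180}$, and $c_6=-\tfrac1{1008}$.
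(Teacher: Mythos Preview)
Your argument is correct and takes a genuinely different route from the paper. The paper argues first that the constants $c_n$ are dimension-free (by looking at product metrics), reduces to dimension $m=2$, and then builds a specific surface metric $ds^2=dr^2+\{r(1+b_n(\theta)r^n)\}^2d\theta^2$ for which $\Theta_P=r(1+b_n(\theta)r^n)$ so that $\mathcal{H}_n=b_n(\theta)$ by inspection; it then computes the Christoffel symbols and the trace of the Jacobi operator by hand to obtain $\nabla_r^{\,n-2}\Tr\{\mathcal{J}_0\}|_{r=0}=-\frac{(n+1)!}{n-1}b_n(\theta)$, whence $c_n=-\frac{n-1}{(n+1)!}$. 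Your approach instead stays in arbitrary dimension and linearizes the matrix Jacobi equation $A''+\mathcal{R}A=0$ in the curvature, reading off $c_n$ as the coefficient of $\Tr\{\mathcal{J}_{n-2}(\theta)\}$ in $\tfrac1r\Tr\{A_1(r)\}$. This is more conceptual: it bypasses the dimension-reduction step and the ad~hoc example, and makes transparent why only the linear-in-curvature part matters for $c_n$ (homogeneity forces the unique linear contribution at degree $n$ to be $\Tr\{\mathcal{J}_{n-2}\}$). The paper's approach, on the other hand, is entirely self-contained and avoids invoking the identification $\Theta_P=\det A$ and the Taylor expansion of $\mathcal{R}(r)$ along the geodesic. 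Both computations collapse to the same factorial identity $(n+1)!=(n+1)n(n-1)(n-2)!$.
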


\subsection{ Examples of harmonic spaces}
If $\mathcal{M}$ is a simply connected 2-point homogeneous space, i.e.,
if $\mathcal{M}$ is either $\mathbb{R}^m$ or {$\mathcal{M}$}
is a rank one symmetric space, then the isometry group of $\mathcal{M}$ acts transitively on the set of unit
tangent vectors and hence $\Theta_P$ is radial for any $P$; consequently any 2-point homogeneous space
is centrally harmonic about any point and hence
is a harmonic space. In negative curvature, the Damek-Ricci spaces are also harmonic spaces;
these are solvmanifolds, but need not be 2-point homogeneous spaces.
All known harmonic spaces are locally homogeneous and modeled on one of these geometries.
{We refer to Berndt~et. al.~\cite{BTV78} for further details.}

\subsection{ Constructing centrally harmonic spaces}
Copson and Ruse~\cite{CR40} gave examples of centrally harmonic spaces by noting that a radial conformal
deformation of the {Euclidean metric $g_e$} is centrally harmonic about the origin.
{More generally, if $\mathcal{M}=(M,g)$ is the germ of a Riemannian
manifold, and if $\psi$ is a smooth non-zero function of 1-variable, we define a radial conformal deformation of
$\mathcal{M}$ by setting}
$$
\mathcal{M}_\psi:=(M,\psi(r_P^2)^{-2}g)\,.
$$
In Section~\ref{S3}, we will establish the following result which shows if $\mathcal{M}$ is centrally
harmonic about $P$, then $\mathcal{M}_\psi$ is centrally harmonic about $P$ as well. Since we can always
take the base manifold $\mathcal{M}$ to be a harmonic space,
this permits us to construct many centrally harmonic spaces generalizing the examples of Copson and Ruse~\cite{CR40}.

\begin{theorem}\label{T1.3}\rm
If $\mathcal{M}=(M,g)$ is the germ of a Riemannian manifold which centrally harmonic {about} $P$,
then $\mathcal{M}_\psi$ is centrally harmonic {about} $P$ {as well.}
\end{theorem}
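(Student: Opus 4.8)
The plan is to verify condition~(a) of the characterization theorem stated above for $\mathcal{M}_\psi$: that the volume density of $\mathcal{M}_\psi$ in geodesic polar coordinates centered at $P$ is radial. By hypothesis the density $\Theta_P$ of $\mathcal{M}$ is already radial. Assume without loss of generality that $\psi>0$ (it is nowhere zero, hence of constant sign), write $r:=r_P$ and $f(r):=\psi(r^2)^{-1}$, and set $\sigma:=\log f$, so that $\tilde g:=\psi(r_P^2)^{-2}g=e^{2\sigma}g$ with $\sigma=\sigma(r)$ a smooth radial function. The crucial geometric input is that a radial conformal factor preserves the radial geodesics emanating from $P$.

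To see this, note that in $g$-geodesic coordinates the gradient $\nabla\sigma=\sigma'(r)\,\partial_r$ points in the radial direction. Hence along any unit-speed radial $g$-geodesic $\gamma$ the vector $\nabla\sigma$ is a multiple of $\dot\gamma$, and the extra terms $\delta^k_i\partial_j\sigma+\delta^k_j\partial_i\sigma-g_{ij}g^{kl}\partial_l\sigma$ in the Christoffel symbols of $\tilde g$ contribute to $\nabla^{\tilde g}_{\dot\gamma}\dot\gamma$ only a multiple of $\dot\gamma$. Thus $\gamma$ remains a $\tilde g$-geodesic after reparametrization. Since these curves realize every direction at $P$, they are exactly the radial $\tilde g$-geodesics, and the $\tilde g$-arclength along each of them is $\tilde r(r)=\int_0^r f(t)\,dt$. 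This reparametrization is strictly increasing and independent of the direction, so the $\tilde g$-geodesic sphere of radius $\tilde r(\rho)$ coincides as a set with the $g$-geodesic sphere $S_P(\rho)$.

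It remains to compute the density. Under the conformal change the measures satisfy $\dvol_{\tilde g}=f^{m}\,\dvol_g$. Writing $\dvol_g=\Theta_P\,dr\,d\theta$ in $g$-geodesic polar coordinates and substituting $dr=f^{-1}\,d\tilde r$ yields
$$
\dvol_{\tilde g}=f^{m}\,\Theta_P\,f^{-1}\,d\tilde r\,d\theta=f(r)^{m-1}\,\Theta_P\,d\tilde r\,d\theta.
$$
The angular variables of the two polar systems differ only by the constant rescaling $f(0)^{-1}$ of the unit sphere at $P$, so the density $\tilde\Theta_P$ of $\mathcal{M}_\psi$ equals $f(r)^{m-1}\,\Theta_P$ up to a multiplicative constant. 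Since $\Theta_P$ is radial by hypothesis and $f$ is radial, $\tilde\Theta_P$ depends only on $r$, and hence only on $\tilde r$. Therefore $\tilde\Theta_P$ is radial and condition~(a) holds for $\mathcal{M}_\psi$.

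The main obstacle is the first step: establishing that the radial conformal deformation sends radial $g$-geodesics to $\tilde g$-geodesics and carefully matching the two systems of geodesic polar coordinates, so that the clean relation $\tilde\Theta_P\propto f^{m-1}\Theta_P$ can be read off. Once this correspondence is in place, the radiality of $\tilde\Theta_P$ follows immediately from that of $\Theta_P$, with no curvature computation required.
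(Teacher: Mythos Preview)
Your proposal is correct and follows essentially the same route as the paper: show that the radial $g$-geodesics from $P$ remain (reparametrized) $g_\psi$-geodesics, so that $(\tilde r,\theta)$ are geodesic polar coordinates for $g_\psi$, and then read off $\tilde\Theta_P=\psi(r^2)^{1-m}\Theta_P$ from the volume form. The only difference is cosmetic: the paper obtains the preservation of radial geodesics in one line by writing $g=dr^2+g_{ab}d\theta^a d\theta^b$ and observing that $g_\psi=d\rr^2+\psi^{-2}g_{ab}d\theta^a d\theta^b$ is already in Gauss-lemma form, whereas you argue via the conformal change of Christoffel symbols; both are valid and yield the same density relation.
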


\subsection{Space forms}\label{S1.6}
$\mathcal{M}$ is said to be a {\it space form} if $\mathcal{M}$
has constant sectional curvature. Let $\mathcal U$ be an open subset of $\mathbb{R}$,
let $\psi$ be a non-zero analytic function on $\mathcal{U}$,
let $\mathcal{O}:=\{\xi\in\mathbb{R}^m:\|\xi\|^2\in\mathcal{U}\}$, and
let $\Psi(\xi):=\psi(\|\xi\|^2)\in C^\infty(\mathcal{O})$. Define a real-analytic radial conformal deformation
of the standard Euclidean metric $g_e$ by setting
$$
\mathcal{N}_\psi:=(\mathcal{O},\Psi^{-2}g_e)\,.
$$
Let $\psi_{a,b}(t):=a+bt$ define $\mathcal{N}_{a,b}$ for $(a,b)\ne(0,0)$. Although the following result is well-known,
we present a proof in Section~\ref{S4} for the sake of completeness since we will need to develop the
requisite preliminaries in any event; we suppose $m\ge3$ as that is the case of interest.

\begin{lemma}\label{L1.4} Let $m\ge3$.
\begin{enumerate}
\item If $\psi$ is linear, then $\mathcal{N}_\psi$ is a space form.
\item If $\mathcal{N}_\psi$ is a space form, then $\psi$ is linear.
\item $\mathcal{N}_{a,b}$ has constant sectional curvature
$4ab$.
\item  If
$\mathcal{M}$ has constant sectional curvature $\kappa$, $\mathcal{M}$
is locally isometric to $\mathcal{N}_{\psi_{1,4\kappa}}$.
\end{enumerate}\end{lemma}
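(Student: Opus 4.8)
The plan is to reduce all four assertions to a single computation of the curvature of the conformally flat metric $\Psi^{-2}g_e$. Writing $\tilde g:=\Psi^{-2}g_e=e^{2f}g_e$ with $f:=-\log\Psi$, I would invoke the standard conformal transformation rule for the $(0,4)$--curvature tensor: because the background $g_e$ is flat, the Weyl tensor of $\tilde g$ vanishes identically, so the full curvature of $\tilde g$ is governed by the symmetric $2$--tensor $T:=\Hess f-df\otimes df+\tfrac12\|\nabla f\|^2 g_e$. For $m\ge3$ a symmetric $2$--tensor is uniquely recovered from its Kulkarni--Nomizu product with $g_e$, so $\tilde g$ has constant sectional curvature if and only if $T$ is a pointwise scalar multiple of $g_e$; in that case, writing $T=\mu\,g_e$, the sectional curvature equals $-2\Psi^2\mu$. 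This reduction is precisely the preliminary the paper promises to reuse.

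The key step is then the explicit evaluation of $T$ for a radial conformal factor $\Psi=\psi(t)$, $t:=\|\xi\|^2$. Setting $h:=\psi'/\psi$, one computes $\partial_i f=-2h\,\xi^i$, hence $\|\nabla f\|^2=4h^2 t$, $(df\otimes df)_{ij}=4h^2\xi^i\xi^j$, and $\Hess f_{ij}=-2h\,\delta_{ij}-4h'\,\xi^i\xi^j$. Assembling these gives
$$T_{ij}=\bigl(2h^2 t-2h\bigr)\delta_{ij}-4\bigl(h'+h^2\bigr)\xi^i\xi^j.$$
Since $h'+h^2=\psi''/\psi$, the rank--one part of $T$ along $\xi\otimes\xi$ vanishes precisely when $\psi''=0$.

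Assertions (1) and (2) are now immediate from the first paragraph: with $m\ge3$, the tensor $T$ is a pointwise multiple of $g_e$ exactly when $\psi''\equiv0$, i.e. when $\psi$ is affine. For (3) I substitute $\psi=a+bt$, so that $h=b/\Psi$ and the displayed formula collapses to $T=-2ab\,\Psi^{-2}g_e$; then the sectional curvature $-2\Psi^2\mu$ with $\mu=-2ab\,\Psi^{-2}$ equals the constant $4ab$ (as a check, $a=b=\tfrac12$ recovers the stereographic sphere of curvature $1$). Finally, (4) follows by combining (3) with the classical local uniqueness of space forms: any metric of constant sectional curvature $\kappa$ is locally isometric to the model of the same dimension and curvature, and by (3) one realizes curvature $\kappa$ inside the family $\mathcal{N}_{a,b}$ by solving $4ab=\kappa$ (for instance normalizing $a=1$).

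The only delicate points are bookkeeping: fixing the signs and normalizations in the conformal curvature formula so that the curvature factor emerges as $4ab$ rather than a rescaling thereof, and the linear--algebra step upgrading ``$T$ proportional to $g_e$'' to constant sectional curvature, which is exactly where $m\ge3$ enters (for $m=2$ every metric is conformally flat and no constraint on $\psi$ survives). The forward implication (linear $\psi\Rightarrow$ space form) is a direct substitution; the substantive direction is the converse, where one must rule out a non-vanishing $\xi\otimes\xi$ term, i.e. derive $\psi''=0$ from constancy of the curvature.
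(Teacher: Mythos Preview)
Your argument is correct and self-contained, but it follows a different path from the paper's. The paper never computes the full Riemann tensor of $\Psi^{-2}g_e$. For Assertions~(1) and~(3) it argues indirectly: it identifies $g_{1,0}$, $g_{\frac12,\frac12}$, and $g_{\frac12,-\frac12}$ with the flat, spherical, and hyperbolic models, then proves a short lemma (inversion $\xi\mapsto\|\xi\|^{-2}\xi$ swaps $g_{a,b}\leftrightarrow g_{b,a}$, dilation $\xi\mapsto c\xi$ gives $g_{a,b}\cong g_{ac^{-1},bc}$, and $g_{ca,cb}$ is homothetic to $g_{a,b}$) and uses these symmetries to propagate the space-form property and the value $4ab$ to every pair $(a,b)$. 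For Assertion~(2) the paper works only at the level of the Ricci tensor: the conformal change formula $\rho_{g_\psi}-\rho_g=(m-2)\Psi^{-1}\Hess_g(\Psi)+(\text{scalar})\,g$ forces $\Hess_{g_e}(\Psi)$ to be a multiple of $g_e$ when both metrics are Einstein and $m\ge3$, and an explicit Hessian computation then yields $\psi''=0$.

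Your route is more direct and uniform: one evaluation of $T=\Hess f-df\otimes df+\tfrac12\|\nabla f\|^2g_e$ with $f=-\log\Psi$ settles all four assertions at once, and the injectivity of $g_e\owedge(\cdot)$ for $m\ge3$ is exactly the right substitute for the paper's appeal to the Einstein condition. What the paper's approach buys is that its Ricci-tensor lemma (Lemma~4.1) is formulated for an arbitrary background $\mathcal{M}$, not just $\mathbb{R}^m$, and is reused verbatim in the proof of Theorem~1.5; your computation, being tied to the flat background, would need to be redone in that generality. Conversely, your method avoids the somewhat ad~hoc case analysis with inversions and homotheties and gives the curvature value $4ab$ by a single substitution rather than by tracking it through a chain of isometries.
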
\black

\subsection{Radial conformal deformations which are centrally harmonic about an intermediate point}
Let $L_\xi$ be the second fundamental form of the geodesic sphere $S_P(\|\xi\|)$ about $P$
which passes through $\xi$.
We say $S_P(\|\xi\|)$ is {\it totally umbillic} at $\xi$ if $L_\xi$ is a multiple of the identity.
As noted by Copson and Ruse~\cite{CR40}, a radial conformal deformation of Euclidean space is in general
not centrally harmonic about any other point. {Recall that every harmonic space is Einstein and that every Einstein manifold
is real analytic.} We will prove the following result in Section~\ref{S5}.

\begin{theorem}\label{T1.5}\rm
Let $P$ be a point of an Einstein manifold $\mathcal{M}=(M,g)$ of dimension $m\ge3$. Assume that $\psi$ is real analytic and that
$\mathcal{M}_\psi$ is centrally harmonic about some vector $\xi$ with $0<\|\xi\|<\iota_P$ in geodesic coordinates.
\begin{enumerate}
\item If $ S_P(\|\xi\|)$ is not totally umbillic at $\xi$, then $\psi$ is constant.
\item If $\mathcal{M}$ is a space form,
then $\mathcal{M}_\psi$ is a space form.
\end{enumerate}\end{theorem}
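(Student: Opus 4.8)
The plan is to read off central harmonicity about $\xi$ from the expansion \eqref{E1.a} applied at $\xi$ to the deformed metric $\bar g:=\psi(r_P^2)^{-2}g$, and then to combine the resulting conditions on the curvature jet of $\bar g$ at $\xi$ with the fact that the conformal factor is radial about $P$ and with the real analyticity of $\psi$. The first point is that, by assertion (a), $\mathcal{M}_\psi$ is centrally harmonic about $\xi$ exactly when $\Theta^{\bar g}_\xi$ is radial, equivalently when every $\mathcal{H}^{\bar g}_k$ in \eqref{E1.a} is radial. The quadratic term is $\mathcal{H}^{\bar g}_2(\eta)=-\tfrac16\Tr\{\mathcal{J}(\eta)\}=-\tfrac16\operatorname{Ric}_{\bar g}(\eta,\eta)$; since a homogeneous quadratic polynomial depending only on $\|\eta\|$ is a multiple of $\|\eta\|^2$, radiality of $\mathcal{H}^{\bar g}_2$ says precisely that $\bar g$ is Einstein at the single point $\xi$.

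Next I would extract the umbillic dichotomy, which already proves (1) to second order. Write $\bar g=e^{2f}g$ with $f=-\log\psi(r_P^2)=:F(r_P)$. The conformal transformation rule for the Ricci tensor, together with the hypothesis that $g$ is Einstein, shows that the trace-free part of $\operatorname{Ric}_{\bar g}$ is $-(m-2)$ times the trace-free part of $\Hess f-df\otimes df$; as $m\ge3$, being Einstein at $\xi$ is equivalent to $\Hess f-df\otimes df$ being pure trace there. Because $f$ is radial, $\Hess f-df\otimes df=(F''-(F')^2)\,dr\otimes dr+F'\,\Hess r$, and $\Hess r$ restricts on $S_P(\|\xi\|)$ to the second fundamental form $L_\xi$ and annihilates $\partial_r$. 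Hence pure-traceness forces $F'(\|\xi\|)\,L_\xi$ to be a multiple of the induced metric: if $S_P(\|\xi\|)$ is not totally umbillic at $\xi$ this is impossible unless $F'(\|\xi\|)=0$, whereupon the radial entry gives $F''(\|\xi\|)=0$ as well. In terms of $\psi$ this reads $\psi'(\|\xi\|^2)=\psi''(\|\xi\|^2)=0$.

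To upgrade this to $\psi\equiv\mathrm{const}$ I would feed in the higher terms. By Lemma~\ref{L1.2} the leading part of $\mathcal{H}^{\bar g}_n$ is $c_n\Tr\{\mathcal{J}_{n-2}(\eta)\}=c_n(\nabla^{\,n-2}_\eta\operatorname{Ric}_{\bar g})(\eta,\eta)$ with $c_n=-\frac{n-1}{(n+1)!}\neq0$, and this involves $\psi$ differentiated up to order $n$. Radiality of $\mathcal{H}^{\bar g}_n$ should isolate $\psi^{(n)}(\|\xi\|^2)$ with a nonzero coefficient controlled by the non-umbillicity of $L_\xi$, the remaining terms involving only lower derivatives of $\psi$ at $\|\xi\|^2$, which vanish by induction; one then concludes $\psi^{(n)}(\|\xi\|^2)=0$ for every $n\ge1$, and the real analyticity of $\psi$ yields that $\psi$ is constant. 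I expect this inductive bookkeeping---showing that in each $\mathcal{H}^{\bar g}_n$ the top $\psi$-derivative genuinely survives with a coefficient that the non-umbillicity keeps nonzero---to be the main obstacle.

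Finally, for the space form case (2) a separate argument is needed, since then every geodesic sphere about $P$ is totally umbillic and Step~2 no longer yields $F'(\|\xi\|)=0$. Using Lemma~\ref{L1.4} I would write $\mathcal{M}$ locally as $\mathcal{N}_{1,4\kappa}$; since the $g$-distance $r_P$ is a real-analytic function of the Euclidean radius, $\mathcal{M}_\psi$ becomes a radial conformal deformation $\mathcal{N}_{\hat\psi}$ of Euclidean space with $\hat\psi$ real analytic, that is, a rotationally symmetric metric $dt^2+\phi(t)^2\,d\omega^2$ with round spheres. For $m\ge3$ the Einstein condition at $\xi$ from the quadratic term reduces to the pointwise equality of the radial and tangential sectional curvatures at the radius $t_0=r(\xi)$, while the higher-order conditions constrain the full radial jet of $\operatorname{Ric}_{\bar g}$ at $t_0$; vanishing of all radial derivatives of this Einstein defect together with analyticity in $t$ forces the defect to vanish identically, so $\mathcal{N}_{\hat\psi}$ has pointwise constant sectional curvature and hence, by Schur's lemma, constant curvature. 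By Lemma~\ref{L1.4} this makes $\hat\psi$ linear, so $\mathcal{M}_\psi=\mathcal{N}_{\hat\psi}$ is a space form.
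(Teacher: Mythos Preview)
Your outline for part~(1) starts well---the Einstein condition at $\xi$ combined with non-umbillicity does force $\psi'(\|\xi\|^2)=\psi''(\|\xi\|^2)=0$---but your inductive mechanism is miscalibrated. Once $\psi'(\|\xi\|^2)=0$, the second-fundamental-form term $-2\psi'(\|\xi\|^2)\,\|\xi\|\,L_\xi$ drops out of $\Hess_g(\Psi)$, so non-umbillicity plays \emph{no role} beyond the first step. The paper's induction (its Lemma~5.1) instead exploits a different asymmetry: assuming $\psi^{(j)}(\|\xi\|^2)=0$ for $1\le j\le n$, the connections of $g$ and $g_\psi$ agree at $\xi$ through order $n$, so one may differentiate the conformal Ricci formula directly; the $(n-1)$st covariant derivative of $\rho_{g_\psi}-\rho_g$ evaluated on $(\partial_{\xi^1},\dots,\partial_{\xi^1})$ picks up $(m-1)\Psi^{-1}\psi^{(n+1)}(\|\xi\|^2)$ from the $4\|\xi\|^2\psi''\,dr\otimes dr$ term, while the same expression on $(\partial_{\xi^i},\dots,\partial_{\xi^i})$ with $i>1$ vanishes. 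Radiality of $\mathcal{H}_{n+1}$ (via Lemma~\ref{L1.2}) then forces $\psi^{(n+1)}(\|\xi\|^2)=0$. The point is that the anisotropy driving the induction is \emph{radial versus tangential}, not the shape of $L_\xi$.

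This distinction matters for part~(2): since Lemma~5.1 requires only $\psi'(\|\xi\|^2)=0$ (not non-umbillicity), it can be reused in the space-form case. The paper's argument is: working with $g=g_e$, write $\psi(t)=(1+at)\phi_a(t)$ and solve for $a$ so that $\phi_a'(\|\xi\|^2)=0$ (this is possible generically; the degenerate case uses $t\phi(t)$ instead). Then $\mathcal{M}_\psi=(\mathcal{N}_{1,a})_{\phi_a}$ with $\mathcal{N}_{1,a}$ a space form, hence Einstein, and Lemma~5.1 applied to $\phi_a$ gives $\phi_a^{(k)}(\|\xi\|^2)=0$ for all $k$, so $\phi_a$ is constant and $\psi$ is linear. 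Your proposed route---arguing directly that the ``Einstein defect'' of the warped product vanishes to all orders in $t$---could in principle be made to work, but as written it is a sketch: you have not shown that radiality of each $\mathcal{H}_n$ at $t_0$ actually kills the $n$th radial derivative of the defect, and the bookkeeping is exactly the obstacle you flagged. The linear-factor trick sidesteps it entirely.
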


\subsection{Totally umbillic geodesic spheres}
The Jacobi operator $\mathcal{J}_0(\xi)$ is a self-adjoint endomorphism of $T_PM$.
Let $\tilde{\mathcal{J}}_0(\xi)$ be the restriction of $\mathcal{J}_0$
to $\xi^\perp$, let $m_P(\xi)$ (resp. $M_P(\xi)$) be the smallest (resp. largest) eigenvalue
of $\tilde{\mathcal{J}}_0(\xi)$, and let
$$
s_P:=\inf_{|\xi|=1}\{M_P(\xi)-m_P(\xi)\}
$$
{be the minimal difference the largest and the smallest eigenvalue of $\tilde{\mathcal{J}}_0(\xi)$
for $\xi$ a unit tangent vector at $P$.}
We will establish the following result in Section~\ref{S6}.
\begin{lemma}\label{L1.6}\rm
\ \begin{enumerate}
\item $\mathcal{M}$ is a space form, then every geodesic sphere is totally umbillic.
\item If every sufficiently small geodesic sphere is totally umbillic and if $m\ge3$, then $\mathcal{M}$
is a space form.
\item If an irreducible symmetric space $\mathcal{M}$ admits a totally umbilical hypersurface $\mathcal{N}$, then
both $\mathcal{M}$ and $\mathcal{N}$ are space forms.
\item If $s_P>0$, then there exists $\varepsilon>0$ so that geodesic spheres of radius
less than $\varepsilon$ at $P$ are not totally umbillic at any point.
\item If $\mathcal{M}$ is a rank one symmetric space
or $\mathcal{M}$ is a Damek-Ricci space, and if $\mathcal{M}$ is not a space form, then $s_P>0$.
\end{enumerate}\end{lemma}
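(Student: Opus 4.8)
The plan is to treat assertions (1), (2), and (4) by a single analytic device, the Riccati equation satisfied by the shape operator of geodesic spheres, and to handle (5) and (3) by more structural arguments. Writing the metric in geodesic polar coordinates as $dr^2+g_r$, the shape operator $S(r)$ of $S_P(r)$ at the point $r\theta$ satisfies $S'+S^2+\tilde{\mathcal{J}}_0(\theta)=0$ along the radial geodesic $t\mapsto t\theta$, subject to the singular normalization $S(r)\sim r^{-1}\operatorname{Id}$; solving to next order gives $S(r)=r^{-1}\operatorname{Id}-\tfrac r3\,\tilde{\mathcal{J}}_0(\theta)+O(r^2)$, consistent with the sub-leading term $\mathcal{H}_2=-\tfrac16\Tr\{\mathcal{J}(\xi)\}$ of the expansion~(\ref{E1.a}). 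For assertion (1), constant sectional curvature $\kappa$ means $\tilde{\mathcal{J}}_0(\theta)=\kappa\operatorname{Id}$ on $\theta^\perp$, so the Riccati equation has the purely scalar solution $S(r)=f_\kappa(r)\operatorname{Id}$ and every geodesic sphere is totally umbillic. For assertion (4), the eigenvalues of $S(r)$ are $r^{-1}-\tfrac r3\mu_i+O(r^2)$ with $\mu_i$ the eigenvalues of $\tilde{\mathcal{J}}_0(\theta)$, so their spread equals $\tfrac r3\bigl(M_P(\theta)-m_P(\theta)\bigr)+O(r^2)\ge\tfrac r3\,s_P+O(r^2)$; since $s_P>0$ and the remainder is uniform over the compact set $\|\theta\|=1$, the spread is strictly positive for all small $r$, so no sufficiently small geodesic sphere is umbillic at any of its points.

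For assertion (2), total umbillicity of every small geodesic sphere about every point forces $S(r)$ to be a scalar matrix, and matching the $O(r)$ term shows $\tilde{\mathcal{J}}_0(\theta)$ is a multiple of the identity for each unit $\theta$. Equivalently, the sectional curvature $K(\theta,\eta)$ is independent of $\eta\in\theta^\perp$; since $m\ge3$, any two unit vectors are linked by a chain of mutually orthogonal ones, so the curvature is pointwise isotropic at every point, and Schur's theorem promotes this to globally constant curvature. Hence $\mathcal{M}$ is a space form.

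Assertion (5) I would settle by computing the spectrum of $\tilde{\mathcal{J}}_0(\theta)$ in each family. For a rank one symmetric space the isometry group is transitive on the unit sphere bundle, so the spectrum of $\tilde{\mathcal{J}}_0(\theta)$ is independent of $\theta$ and $s_P$ equals its spread; in the complex, quaternionic, and Cayley cases this spectrum is the pair of extremal curvature values $c$ and $4c$, so $s_P=3|c|>0$ unless the space is a real space form. For a Damek--Ricci space I would instead use the explicit form of $\tilde{\mathcal{J}}_0(\theta)$ provided by the generalized Heisenberg algebra $\mathfrak{v}\oplus\mathfrak{z}\oplus\mathfrak{a}$ of Berndt et al.~\cite{BTV78}: for every unit $\theta$ the operator takes two distinct eigenvalues attached to the $J$-invariant and complementary distributions, with a gap bounded below uniformly in $\theta$ by homogeneity, and this gap collapses only when the centre $\mathfrak{z}$ is trivial, i.e.\ only for real hyperbolic space. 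In both families, failure to be a space form forces $s_P>0$.

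I expect assertion (3) to be the main obstacle, and the Codazzi equation does the first half of the work. Writing the second fundamental form of the totally umbillic hypersurface as $h=\lambda\,g|_{\mathcal{N}}$ and using $\nabla g=0$, the Codazzi equation becomes $(X\lambda)g(Y,Z)-(Y\lambda)g(X,Z)=R(X,Y,Z,N)$ for tangent $X,Y,Z$ and unit normal $N$. Tracing $Y=Z$ over an orthonormal frame of $T\mathcal{N}$ yields $(m-2)\,X\lambda=\operatorname{Ric}(X,N)$; an irreducible symmetric space is Einstein and $X\perp N$, so the right-hand side vanishes and $\lambda$ is constant (here $m\ge3$). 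With $d\lambda=0$ the Codazzi equation reduces to $R(X,Y,Z,N)=0$ for all tangent $X,Y,Z$, so the hyperplane $T\mathcal{N}$ is curvature-invariant. By the Cartan correspondence this integrates to a totally geodesic hypersurface of $\mathcal{M}$; but an irreducible symmetric space admits a totally geodesic hypersurface only when its curvature is constant, the rank one non-space-forms carrying totally geodesic submanifolds only in codimension at least two and the higher-rank spaces none in codimension one. Hence $\mathcal{M}$ is a space form, and the Gauss equation then shows the totally umbillic $\mathcal{N}$ is one as well. The delicate point is this classification of curvature-invariant hyperplanes, where I would rely on the Chen--Nagano analysis of totally geodesic and totally umbillic submanifolds of symmetric spaces.
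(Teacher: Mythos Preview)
Your proposal is correct and considerably more self-contained than the paper's own treatment, which is largely a string of citations. The paper proves (1) in one line by noting that the local isometry group of a space form acts transitively on the unit tangent bundle of each geodesic sphere; it dispatches (2) by citing Chen--Vanhecke~\cite{CV81}, Kulkarni~\cite{K75}, and Vanhecke--Willmore~\cite{VW79}; it dispatches (3) by citing Chen~\cite{C80}; and it proves (4) exactly as you do, via the Chen--Vanhecke expansion $\sigma_{ab}=r^{-1}\delta_{ab}-\tfrac r3 R_{\xi a\xi b}(P)+O(r^2)$. For (5) the paper quotes the explicit Jacobi spectra tabulated in Berndt--Tricerri--Vanhecke~\cite{BTV78}: in five of the six cases of their classification the reduced Jacobi operator has eigenvalues $\{-\tfrac14,-1\}$, so $M_P(\xi)-m_P(\xi)=\tfrac34$, and in the remaining case one still has $M_P(\xi)-m_P(\xi)\ge\tfrac34$.

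The differences are thus ones of method rather than substance. Your Riccati argument for (1) and your expansion-plus-Schur argument for (2) unpack what the cited references contain, and your Codazzi argument for (3) is essentially the skeleton of Chen's proof in~\cite{C80}; the ``delicate point'' you flag---that an irreducible symmetric space admits a totally geodesic hypersurface only if it has constant curvature---is precisely the content of that paper, so you end up leaning on the same source. Your treatment of (5) for Damek--Ricci spaces is a little loose: the reduced Jacobi operator generally has more than two eigenvalues, and the uniform positive lower bound on the spread over all unit $\theta$ really does require the explicit case-by-case computation in~\cite{BTV78} that the paper invokes. But you point to the same reference, so the argument can be completed along the lines the paper indicates.
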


\subsection{Radial conformal deformations of the sphere}\label{S1.9}
Theorem~\ref{T1.3} and Theorem~\ref{T1.5} deal with
points within the injectivity radius. Let $\mathcal{S}:=(S^m,g_{S^m})$ where
$g_{S^m}$ is the standard round metric on the unit sphere
$S^m$ of $\mathbb{R}^{m+1}$. Denote the north and south poles of $S^m$ by $P_\pm:=(\pm1,0,\dots,0)$,
respectively; $d_{P_\pm}(\xi)=\arccos(\pm\xi^1)$ and $\iota_{\pm}=\pi$.
Let $\psi$ be a positive real analytic function of 1-variable and let
$\mathcal{S}_\psi:=(S^m,\psi((\xi^1)^2)^{-2}g_{S^m})$.
We will establish the following result in Section~\ref{S7}.

\begin{lemma}\label{L1.7}\rm $\mathcal{S}_\psi$ is centrally harmonic about the points $P_\pm$. If
$\mathcal{S}_\psi$ is not a space form and if $m\ge3$, then $\mathcal{S}_\psi$ is centrally harmonic about no
points of the sphere other than $P_\pm$.
\end{lemma}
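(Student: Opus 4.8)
The plan is to reduce both assertions to the results already proved for radial conformal deformations within the injectivity radius, namely Theorem~\ref{T1.3} and Theorem~\ref{T1.5}, after re-expressing the ambient factor $\psi((\xi^1)^2)$ as a genuine radial deformation about each pole. The governing observation is that $d_{P_\pm}(\xi)=\arccos(\pm\xi^1)$ gives $(\xi^1)^2=\cos^2(d_{P_\pm}(\xi))=\cos^2 r$, where $r:=r_{P_\pm}$ is the geodesic distance from the pole; thus on $S^m\setminus\{P_\mp\}$, which lies inside the injectivity radius $\iota_{P_\pm}=\pi$, the conformal factor depends on $r$ alone. Since $\psi>0$ is analytic and $(\xi^1)^2$ is smooth on $S^m$, the metric $\mathcal{S}_\psi$ is a bona fide real analytic Riemannian manifold.

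The technical crux is to match this to the precise form $\psi(r_P^2)^{-2}g$ demanded by those theorems, in which the factor is analytic in the \emph{squared} distance. I would set $\tilde\psi(s):=\psi(\cos^2\sqrt s)$ and verify that $\tilde\psi$ is real analytic on a neighbourhood of $[0,\pi^2)$; the only delicate point is $s=0$, where $\cos^2\sqrt s=\tfrac12\bigl(1+\sum_{k\ge0}(-1)^k4^ks^k/(2k)!\bigr)$ is manifestly analytic across the origin, so its composition with the analytic $\psi$ is analytic as well. With this choice $\tilde\psi(r^2)=\psi(\cos^2 r)=\psi((\xi^1)^2)$, whence on the germ about either pole $\mathcal{S}_\psi$ coincides with $\mathcal{S}_{\tilde\psi}=(S^m,\tilde\psi(r_{P_\pm}^2)^{-2}g_{S^m})$ in the notation of Theorem~\ref{T1.3}.

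For the first assertion, the round sphere $\mathcal{S}$ is two-point homogeneous, hence a harmonic space, and so is centrally harmonic about $P_\pm$. Applying Theorem~\ref{T1.3} to $\mathcal{M}=\mathcal{S}$ with the analytic factor $\tilde\psi$ shows that $\mathcal{S}_\psi=\mathcal{S}_{\tilde\psi}$ is centrally harmonic about $P_\pm$, as claimed. For the second assertion I would argue by contraposition. Let $Q\neq P_\pm$ and suppose $\mathcal{S}_\psi$ is centrally harmonic about $Q$. Taking $P=P_+$ as base point, $Q=\exp_{P_+}\xi$ with $0<\|\xi\|=r_{P_+}(Q)<\pi=\iota_{P_+}$, precisely because $Q$ is neither pole. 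Now $\mathcal{S}$ is an Einstein space form with $m\ge3$ and the deformation is $\mathcal{S}_{\tilde\psi}$ with $\tilde\psi$ analytic, so Theorem~\ref{T1.5} applies. Its part~(1) yields nothing here, since all geodesic spheres of the round sphere are totally umbillic by Lemma~\ref{L1.6}(1); the operative conclusion is part~(2): as $\mathcal{S}$ is a space form, so is $\mathcal{S}_\psi$. Thus central harmonicity about any non-polar point forces $\mathcal{S}_\psi$ to be a space form, and the contrapositive is exactly the desired statement.

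The step I expect to require the most care is not the invocation of the two theorems but the bookkeeping that the ambient factor is genuinely a radial analytic deformation about the poles: confirming the analyticity of $\tilde\psi$ through $r=0$, and checking that the intrinsic central harmonicity of $\mathcal{S}_\psi$ about $Q$ agrees with the geodesic-coordinate formulation of Theorem~\ref{T1.5} relative to $P_+$. The latter is justified by the real analyticity of $\mathcal{S}_\psi$, a conformal rescaling of the analytic round metric by an analytic factor, which reduces central harmonicity to radiality of the germ of the volume density through the expansion~\eqref{E1.a}; consequently restricting to the chart $S^m\setminus\{P_-\}$ containing $Q$ costs nothing.
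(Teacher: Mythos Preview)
Your proposal is correct and follows essentially the same route as the paper: invoke Theorem~\ref{T1.3} to obtain central harmonicity at $P_\pm$, and for any other point use that it lies within the injectivity radius of a pole and apply Theorem~\ref{T1.5}(2) (the sphere being a space form) to force $\mathcal{S}_\psi$ itself to be a space form. The only difference is that you spell out the bookkeeping the paper leaves implicit, namely rewriting $\psi((\xi^1)^2)$ as $\tilde\psi(r_{P_\pm}^2)$ with $\tilde\psi(s)=\psi(\cos^2\sqrt s)$ and verifying its analyticity through $s=0$; this is a genuine detail needed to match the hypotheses of Theorems~\ref{T1.3} and~\ref{T1.5}, and your treatment of it is correct.
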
\black

\subsection{A non-flat example with trivial volume density function}
We will use Theorem~\ref{T1.3} to establish the following result in Section~\ref{S8}.
\begin{theorem}\label{T1.8}\rm If $m\ge4$ is even, then there exists a Riemannian
metric $g$ on $\mathbb{R}^m$ which is centrally harmonic about the origin, which is
not conformally flat, and which has $\Theta_0=r^{m-1}$.
\end{theorem}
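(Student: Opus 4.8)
The plan is to reduce the theorem to two essentially independent observations. First, conformal flatness is a conformal invariant, and $\mathcal{M}_\psi=(M,\psi(r_P^2)^{-2}g)$ is by construction conformally equivalent to $\mathcal{M}$; hence $\mathcal{M}_\psi$ is conformally flat if and only if $\mathcal{M}$ is, and non-conformal-flatness can be arranged once and for all by the choice of \emph{model}. Second, a radial conformal deformation may be tuned so as to normalize the volume density to the Euclidean one, while Theorem~\ref{T1.3} guarantees that central harmonicity about $P$ is preserved for every admissible $\psi$. Thus it suffices to begin with a harmonic space which is centrally harmonic about $P$, is diffeomorphic to $\mathbb{R}^m$, and is \emph{not} conformally flat, and then to solve for $\psi$ forcing $\Theta_0=r^{m-1}$.

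For the model I would take complex hyperbolic space $\mathcal{M}=\mathbb{CH}^{m/2}$ of real dimension $m=2k$ with $k\ge2$; this is the point at which the hypothesis that $m\ge4$ is even enters. As a rank one symmetric space of noncompact type it is a harmonic space, hence centrally harmonic about its center $P$; it is Einstein, globally real analytic, and a Hadamard manifold diffeomorphic to $\mathbb{R}^m$ with $\iota_P=\infty$, so geodesic coordinates at $P$ furnish a global real-analytic chart on $\mathbb{R}^m$. Since $\mathbb{CH}^{k}$ does not have constant sectional curvature for $k\ge2$, its Weyl tensor is non-zero (an Einstein manifold of dimension $\ge4$ with vanishing Weyl tensor is a space form), so it is not conformally flat. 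This is precisely the even-dimensional supply of non-space-form two-point homogeneous spaces; the space forms, being conformally flat, would not serve, which is why the odd-dimensional constant-curvature models are excluded.

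It then remains to choose the profile $\psi$. Writing $\Theta_g(r)$ for the radial volume density of $\mathcal{M}$ in geodesic polar coordinates and $F(r):=\Theta_g(r)^{1/(m-1)}$, I would first record how $\Theta$ transforms under $g\mapsto\psi(r^2)^{-2}g$: because the conformal factor is radial the radial geodesics are unchanged as unparametrized curves, the new radial arclength is $\rho(r)=\int_0^r\psi(s^2)^{-1}\,ds$, and the geodesic sphere through $\xi$ carries the rescaled induced metric, whence
$$
\Theta_0(\rho)=\psi(r^2)^{-(m-1)}\Theta_g(r),\qquad \rho=\rho(r)\,.
$$
Setting $h(r):=\psi(r^2)^{-1}$, the requirement $\Theta_0(\rho)=\rho^{m-1}$ reads $h(r)F(r)=\int_0^r h(s)\,ds$; differentiating gives the separable linear equation $h'/h=(1-F')/F$, solved by $h(r)=\exp\bigl(\int_0^r (1-F'(s))/F(s)\,ds\bigr)$, and conversely this $h$ satisfies the integral identity because both sides vanish at $r=0$ and have equal derivatives.

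The step I expect to carry the real work, and the main obstacle, is verifying that this $h$ defines an admissible $\psi$. Near $r=0$ one has $F(r)=r+O(r^3)$ with $F$ odd, so $(1-F')/F=O(r)$ is odd and integrable, making $\int_0^r(1-F')/F$ an even real-analytic function; hence $h$ is even, positive and real analytic, i.e. $h(r)=\tilde h(r^2)$, and $\psi:=1/\tilde h$ is a positive real-analytic function of a single variable as demanded by the definition of $\mathcal{M}_\psi$. Positivity of $F$ on $(0,\infty)$, valid since $\iota_P=\infty$, keeps $\psi$ finite and positive on all of $[0,\infty)$, so $\psi(r^2)^{-2}g$ is a genuine real-analytic metric on $\mathbb{R}^m$. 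By Theorem~\ref{T1.3} it is centrally harmonic about the origin, by construction $\Theta_0=r^{m-1}$, and by the conformal invariance noted at the outset it is not conformally flat, which completes the proof. The two delicate points are the transformation law for $\Theta$ (resting on the preservation of radial geodesics established for Theorem~\ref{T1.3}) and the verification that the ODE solution is even in $r$ and everywhere positive.
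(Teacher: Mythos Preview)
Your argument is correct and follows the same overall strategy as the paper: start from a harmonic space that is not conformally flat and use a radial conformal deformation (together with Theorem~\ref{T1.3} and Equation~(\ref{E3.a})) to normalize the volume density. The difference lies in the choice of model. The paper works with complex \emph{projective} space $\mathbb{CP}^{m/2}$ with the cut locus removed, so the underlying manifold is an open geodesic ball of radius $\pi/2$ (diffeomorphic to $\mathbb{R}^m$), and the authors simply invoke Equation~(\ref{E3.a}) to assert that $\psi$ can be chosen to force $\tilde\Theta_{P,g_\psi}=1$. You instead take complex \emph{hyperbolic} space $\mathbb{CH}^{m/2}$, which is a Hadamard manifold globally diffeomorphic to $\mathbb{R}^m$ with $\iota_P=\infty$, and you solve explicitly for the profile via the ODE $h'/h=(1-F')/F$, checking carefully that the solution is even, real analytic, and positive. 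Your approach buys a cleaner global picture (no cut locus to excise, $F>0$ on all of $(0,\infty)$ automatically) and a self-contained verification that $\psi$ exists with the required regularity; the paper's choice of $\mathbb{CP}^{m/2}$ is what drives their subsequent Remark on essential geodesic incompleteness near the cut locus, a phenomenon your noncompact model sidesteps.
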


\begin{remark}\rm Since the metric $g$ of Theorem~\ref{T1.8} is not conformally flat, $g$ is not flat. Since any harmonic
space with trivial volume density function is flat, $g$ is not a harmonic metric. We will show in Section~\ref{S8}
that $g$ is essentially geodesically incomplete in dimensions $4$, $6$, and $8$.
\end{remark}

\section{The proof of Lemma~\ref{L1.2}: A leading term analysis}\label{S2}
{We use Equation~(\ref {E1.c}) to assume $n\ge7$ in the proof of
Lemma~\ref{L1.2}. We express $\mathcal{H}_n(\xi)=c_n\Tr\{\mathcal{J}_{n-2}(\xi)\}+\text{lower order terms}$.
By considering product formulas, we see that the coefficients $c_n$ are dimension free so we may take $m=2$.
We set $ds^2=dr^2+f(r,\theta)d\theta^2$ where $f(r,\theta):=\{r(1+b_n(\theta)r^n)\}^2$. We then have
$\Theta(r, \theta)=r(1+{ b_n(\theta)}r^n)$ so $\mathcal{H}_n(\partial_r^\theta)={b_n(\theta)}$
where $\partial_r^\theta$ is the radial vector field pointing from the origin to $\theta\in S^1$.
We adapt an argument from
Gilkey and Park~\cite{GP20a}. Let $f_r:={\partial_r^\theta}f$, $f_{rr}={\partial_r^\theta}{\partial_r^\theta}f$, and so forth.}
We use the Koszul formula to compute:
$$\begin{array}{llll}
\Gamma_{rrr}=0,&\Gamma_{rr\theta}=0,&\Gamma_{rr}{}^r=0,&\Gamma_{rr}{}^\theta=0,\\[0.05in]
\Gamma_{r\theta r}=0,&\Gamma_{r\theta\theta}=\frac12{ f_{r}},&\Gamma_{r\theta}{}^r=0,
&\Gamma_{r\theta}{}^\theta=\frac12{ f_{r}}f^{-1},\\[0.05in]
\Gamma_{\theta\theta r}=-\frac12f_{r},&
\Gamma_{\theta\theta\theta}=\frac12f_{\theta},&\Gamma_{\theta\theta}{}^r=-\frac12f_{r},
&\Gamma_{\theta\theta}{}^\theta=\frac12{ f_{ \theta}} f^{-1}.
\end{array}$$
Thus we have that
\begin{eqnarray*}
&&\nabla_\theta \nabla_r {\partial_r^\theta}=0,\\
&& \nabla_r\nabla_\theta{\partial_r^\theta}={ \nabla_r}\{\Gamma_{\theta r}{}^\theta\partial_\theta\}=
{ \textstyle\{\frac12f_{rr}f^{-1}-\frac12f_{r}f_{r}f^{-2}+\frac14f_{r}f_{r}f^{-2}\}}\partial_\theta,\\
&&R(\partial_\theta,{\partial_r^\theta},{\partial_r^\theta},\partial_\theta)=\textstyle-\frac12f_{rr}+\frac14f_{r}f_{r}f^{-1},\\
&&\operatorname{Tr}\{{\mathcal{J}_0({\partial_r^\theta})}\}=f^{-1}\{\textstyle-\frac12f_{rr}+\frac14f_{r}f_{r}f^{-1}\}\,.
\end{eqnarray*}
We compute:
\medbreak\quad $f(r, \theta)=r^2+2{ b_n(\theta)}r^{n+2}+O(r^{n+3})$,
\smallbreak\quad $f^{-1}(r, \theta){=}r^{-2}(1-2{ b_n(\theta)}r^n+O(r^{n+1}))$,
\smallbreak\quad $-\frac12f_{ rr}=-1-{{(n+2)(n+1)}}{ b_n(\theta)}r^n+O(r^{n+1})$,
\smallbreak\quad $\frac14{ f_r^2} f^{-1}=(r+(n+2){ b_n(\theta)}r^{n+1}+O(r^{n+2}))^2r^{-2}(1-2{ b_n(\theta)}r^n+O(r^{n+1}))$
\smallbreak\quad\quad$=1+(2(n+2)-2){ b_n(\theta)}r^n+O(r^{n+1})$,
\smallbreak\quad $-\frac12f_{ rr}+\frac14{ f_r^2} f^{-1}=b_n{{(\theta)(-(n+2)(n+1)+ 2(n+1))}}r^n+O(r^{n+1})$
\smallbreak\quad\quad$=-n(n+1)b_n(\theta) r^n+O(r^{n+1}),$
\smallbreak\quad $\Tr\{\mathcal{J}{({\partial_r^\theta})}\}=f^{-1}\{-\frac12f_{ rr}+\frac14{ f_r^2} f^{-1}\}$
\smallbreak\quad\quad$=r^{-2}(1-2{ b_n(\theta)}r^n+O(r^{n+1}))(-n(n+1){ b_n(\theta)}r^n+O(r^{n+1}))$
\smallbreak\quad\quad$=-n(n+1){ b_n(\theta)}r^{n-2}+O{{(r^{n-1})}}$,
\smallbreak\quad$\nabla_{{\partial_r^\theta}}^{n-2}\Tr\{\mathcal{J}{({\partial_r^\theta})}|_{r=0}{{\}}}=-\frac{(n+1)!}{n-1}{ b_n(\theta)}$.
\medbreak\noindent Consequently,
$c_n=-\frac{n-1}{(n+1)!}$.~\qed

\section{Proof of Theorem~\ref{T1.3}: Constructing centrally harmonic spaces}\label{S3}
Let $(r,\theta)$ be geodesic polar coordinates centered at a point $P$. Choose local coordinates
$\theta=(\theta^1,\dots,\theta^{m-1})$ on
the unit sphere to express $g=dr^2+g_{ab}(r,\theta)d\theta^ad\theta^b$
and $\Theta_P(r,\theta)=\det(g_{ab}(r,\theta))^{\frac12}\nu(\theta)$ where
$d\theta=\nu(\theta)d\theta^1\cdot\cdot\cdot d\theta^{m-1}$.
Let $\rr(r)$ satisfy $\rr(0)=0$ and $d\rr=\psi(r^2)^{-1}dr$. Let $r(\rr)$ be the inverse function.
We have
\begin{eqnarray*}
g_\psi&=&\psi(r^2)^{-2}g=\psi^{-2}dr^2+\psi^{-2}g_{ab}(r,\theta)d\theta^ad\theta^b\\
&=&d\rr^2+\psi^{-2}(r(\rr)^2)g_{ab}(r(\rr),\theta)d\theta^ad\theta^b\,.
\end{eqnarray*}
Consequently, $(\rr,\theta)\rightarrow r(\rr)\cdot\theta$ gives geodesic polar coordinates for the metric $g_\psi$ and
$\rr$ is the geodesic distance function for $g_\psi$.
We then have
\begin{equation}\label{E3.a}
\Theta_{P,g_\psi}(\rr,\theta)=\psi(r(\rr)^2)^{1-m}\Theta_{P,g}(r(\rr))
\end{equation}
and
$g_\psi$ is harmonic at the point $P$ as well.~\qed

\section{Proof of Lemma~\ref{L1.4}: Space forms}\label{S4}
We adopt the following notational conventions in Section~\ref{S4}.
Let $\mathcal{M}=(M,g)$ be a Riemannian manifold and let $\mathcal{M}_\psi:=(M,\Psi^{-2}g)$ be a conformal
{radial} deformation of $\mathcal{M}$.
Let $\rho$ and $\rho_\psi$ be the Ricci tensors of $g$ and $g_\psi$. {If $\phi$ is a smooth function on $M$, let
$\Hess_g(\phi):=\nabla^2\phi$ be the Hessian of $\phi$ with respect to $g$;}
\begin{equation}\label{E4.a}
\Hess_g(\phi)=\nabla^2\phi=\{\partial_{\xi^i}\partial_{\xi^j}\phi-\Gamma_{ij}{}^k\partial_{\xi^k}\phi\}d\xi^i\otimes d\xi^j\,.
\end{equation}
Fix $\xi\in T_PM$ with $0<\|\xi\|<\iota_P$. Choose the coordinate system on $T_PM$ so $\xi=(\|\xi\|,0,\dots,0)$.
{The following is a crucial technical result that will play a central role in the proof of Lemma~\ref{L1.4} and
of Theorem~\ref{T1.5}.}

\begin{lemma}\label{L4.1}\
\begin{enumerate}
\item If $\mathcal{M}$ is centrally harmonic {about} $P$, then $\mathcal{M}$ is Einstein at $P$.
\item $\rho_{g_\psi}-\rho_{g}=\Psi^{-1}(m-2)\operatorname{Hess}_g(\Psi)
+\{-\Psi^{-1}{\Delta_{\mathcal{M}}^0}\Psi-(m-1)\Psi^{-2}\|d\Psi\|_g^2\}g$.
\item $L_\xi(\partial_{\xi^i},\partial_{\xi^j})=-\|\xi\|^{-1}\delta_{ij}+\Gamma_{ij}{}^1(\xi)$.
\item Assume $\mathcal{M}$ and $\mathcal{M}_\psi$ are Einstein at $\xi$ {and that $m\ge3$}.
\begin{enumerate}\item If $L_\xi$ is not a multiple of the identity,
then $\psi^{\prime}(\|\xi\|^2)=0$.
\item If $\psi^\prime(\|\xi\|^2)=0$, then $\psi^{\prime\prime}(\|\xi\|^2)=0$.
\end{enumerate}
\end{enumerate}
\end{lemma}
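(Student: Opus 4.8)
The plan is to treat the four assertions in the order given, since (2) and (3) are the technical inputs to (4), while (1) is what supplies the Einstein hypothesis needed to invoke (4) in the applications (taking the centre to be the intermediate point). For assertion (1), I would invoke characterization~(a) of the cited theorem: central harmonicity about $P$ is equivalent to $\Theta_P$ being radial. Feeding this into the expansion~\eqref{E1.a} and using the homogeneity $\mathcal{H}_k(c\xi)=c^k\mathcal{H}_k(\xi)$, I would extract that each homogeneous term $\mathcal{H}_k$ is separately radial; in particular $\mathcal{H}_2(\xi)=-\tfrac16\Tr\{\mathcal{J}_0(\xi)\}$ from~\eqref{E1.c} is radial. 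Since $\Tr\{\mathcal{J}_0(\xi)\}$ equals the value $\rho(\xi,\xi)$ of the Ricci quadratic form, and a degree-two homogeneous polynomial depending only on $\|\xi\|$ must be a constant multiple of $\|\xi\|^2$, polarization gives $\rho=\lambda g$ at $P$, i.e. $\mathcal{M}$ is Einstein at $P$.

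For assertion (2), I would start from the standard transformation law of the Ricci tensor under a conformal change $g_\psi=e^{2f}g$ and substitute $f=-\log\Psi$. The pointwise identities $df=-\Psi^{-1}d\Psi$, $\nabla^2 f=-\Psi^{-1}\Hess_g(\Psi)+\Psi^{-2}d\Psi\otimes d\Psi$, and $df\otimes df=\Psi^{-2}d\Psi\otimes d\Psi$ collapse the trace-free part of the correction to exactly $(m-2)\Psi^{-1}\Hess_g(\Psi)$, and tracking the scalar term, with the sign convention $\Delta_{\mathcal{M}}^0=-\operatorname{tr}_g\nabla^2$ fixed by the Euclidean normalization $\Delta_e^0=-\sum\partial_{\xi^i}^2$ of the introduction, produces the stated coefficient $-\Psi^{-1}\Delta_{\mathcal{M}}^0\Psi-(m-1)\Psi^{-2}\|d\Psi\|_g^2$. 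This step is pure bookkeeping once the sign of the Laplacian is pinned down.

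Assertion (3) is the delicate preparatory computation. Working in the geodesic coordinates normalized so that $\xi=(\|\xi\|,0,\dots,0)$, I would compute the second fundamental form of $S_P(\|\xi\|)$ from the shape operator $X\mapsto\nabla_X\partial_r$, using $\partial_r=\|\xi\|^{-1}\xi^k\partial_{\xi^k}$ and $\partial_{\xi^i}\|\xi\|=\xi^i/\|\xi\|$. For tangential indices this gives $L_\xi(\partial_{\xi^i},\partial_{\xi^j})=-\|\xi\|^{-1}g_{ij}-\Gamma_{i1j}$, the overall sign being the one that makes the Euclidean sphere satisfy $L=-\|\xi\|^{-1}\operatorname{Id}$. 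To reconcile this with the stated $-\|\xi\|^{-1}\delta_{ij}+\Gamma_{ij}{}^1$, I would differentiate the radial Gauss lemma $\xi^k g_{kj}=\xi^j$ to obtain $\xi^k\partial_i g_{kj}=\delta_{ij}-g_{ij}$, equivalently $\partial_i g_{1j}=\|\xi\|^{-1}(\delta_{ij}-g_{ij})$ at $\xi$; combined with $\Gamma_{i1j}+\Gamma_{ij1}=\partial_i g_{1j}$ and $\Gamma_{ij1}=\Gamma_{ij}{}^1$ at $\xi$, the two expressions agree. The delicate points are fixing the normal orientation and getting this differentiated Gauss identity exactly right.

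For assertion (4), the payoff, I would combine (2) and (3). Writing $\Psi(\xi)=\psi(\|\xi\|^2)$ and computing directly, $\Hess_g(\Psi)_{ij}=4\psi''\xi^i\xi^j+2\psi'\delta_{ij}-2\psi'\Gamma_{ij}{}^k\xi^k$; substituting (3) this simplifies at $\xi$ to
\begin{equation*}
\Hess_g(\Psi)_{ij}=4\psi''(\|\xi\|^2)\,\|\xi\|^2\,\delta_{i1}\delta_{j1}-2\psi'(\|\xi\|^2)\,\|\xi\|\,L_\xi(\partial_{\xi^i},\partial_{\xi^j}).
\end{equation*}
Since $\mathcal{M}$ and $\mathcal{M}_\psi$ are both Einstein at $\xi$, assertion (2) forces $(m-2)\Psi^{-1}\Hess_g(\Psi)$ to be a multiple of $g$ at $\xi$; as $m\ge3$ this yields $\Hess_g(\Psi)=c\,g$ there. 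Restricting to the tangential block $i,j\ge2$, where the $\delta_{i1}\delta_{j1}$ term drops, gives $-2\psi'(\|\xi\|^2)\|\xi\|\,L_\xi=c\,g$ on $\xi^\perp$; if $L_\xi$ is not a multiple of the identity this is impossible unless $\psi'(\|\xi\|^2)=0$, proving (4a). Granting $\psi'(\|\xi\|^2)=0$, nondegeneracy of the tangential block forces $c=0$, and the $(1,1)$ component then reads $4\psi''(\|\xi\|^2)\|\xi\|^2=0$, whence $\psi''(\|\xi\|^2)=0$, proving (4b). The main obstacle is precisely this step: one must recognize that the two Einstein conditions conspire, through (2), to make $\Hess_g(\Psi)$ pure trace, and then exploit the clean split in the displayed formula into a radial $\psi''$-piece and a second-fundamental-form $\psi'$-piece to peel off the two derivatives separately.
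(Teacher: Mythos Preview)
Your proposal is correct and, for assertions~(1) and~(4), matches the paper's argument essentially line by line: the paper likewise reads off the Einstein condition at $P$ from the radiality of $\mathcal{H}_2$, computes $\Hess_g(\Psi)$ in the adapted frame, and uses the splitting into a $dr\otimes dr$ piece carrying $\psi''$ and a tangential piece proportional to $\psi'\,L_\xi$ to derive (4a) and (4b). The only substantive divergences are in the two technical inputs. For~(2) the paper simply cites K\"uhnel--Rademacher, whereas you sketch the direct derivation from the conformal Ricci formula with $f=-\log\Psi$; your bookkeeping is correct and has the virtue of making the sign convention $\Delta_{\mathcal{M}}^0=-\operatorname{tr}_g\nabla^2$ explicit. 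For~(3) the paper instead parametrizes great circles $\sigma_i(\theta)=\|\xi\|\cos\theta\,\partial_{\xi^1}+\|\xi\|\sin\theta\,\partial_{\xi^i}$ in $S_P(\|\xi\|)$ and computes $L_\xi(\partial_{\xi^i},\partial_{\xi^i})=\|\xi\|^{-2}g(\nabla_{\dot\sigma_i}\dot\sigma_i,\partial_{\xi^1})|_{\theta=0}=-\|\xi\|^{-1}+\Gamma_{ii}{}^1$ directly, then polarizes; your route via the shape operator $\nabla_X\partial_r$ together with the differentiated Gauss identity $\|\xi\|\partial_ig_{1j}=\delta_{ij}-g_{ij}$ is a clean alternative that avoids the polarization step and makes the passage from $\Gamma_{i1j}$ to $\Gamma_{ij}{}^1$ transparent.
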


\begin{proof} If $\mathcal{M}$ is centrally harmonic {about} $P$,
then $\mathcal{H}_2$ only depends on $\|\xi\|$ so we shall write $\mathcal{H}_2(\xi)=\mathcal{H}_2(\|\xi\|)$.
 In particular, by Equation~(\ref {E1.c}), $\rho_g(\xi,\xi)=\Tr\{\mathcal{J}_g(\xi)\}$ only depends on $\|\xi\|$ so $\rho_g(\xi,\xi)=c\|\xi\|^2$
and Assertion~(1) follows. We refer to
 K\"uhnel and Rademacher~\cite{KR08} for the proof of Assertion~(2).
 If $i>1$, let
 $\sigma_i(\theta):=\|\xi\|\cos(\theta)\partial_{\xi^1}+\|\xi\|\sin(\theta)\partial_{\xi^i}$ define a curve in $S_P(\|\xi\|)$ with
 $\dot\sigma_i(0)=\|\xi\|\partial_{\xi^i}$.
 Assertion~(3) follows by polarizing the identity
\begin{eqnarray*}
L_\xi(\partial_{\xi^i},\partial_{\xi^i})&=&
\|\xi\|^{-2}\left.\left\{g(\nabla_{g,\dot\sigma_i}\dot\sigma_i,\partial_{\xi^1})\right\}\right|_{\theta=0}\\
&=&\|\xi\|^{-2}\left.\left\{(\partial_\theta^2\sigma_i,\partial_{\xi^1})+\|\xi\|^2(\nabla_{g,\partial_{\xi^i}}\partial_{\xi^i},\partial_{\xi^1})\right\}\right|_{\theta=0}\\
&=&-\|\xi\|^{-1}+\Gamma_{ii}{}^1\,.
\end{eqnarray*}

Suppose that $\mathcal{M}$ and $\mathcal{M}_\psi$ are Einstein at $\xi$.
{Since $m\ge3$, Assertion~(2) implies that $\Hess_g(\Psi)(\xi)$ is a multiple of $g$;
if $m=2$, then
we obtain no information from the Einstein condition and it is for this reason we assume $m\ge3$ henceforth
whenever using Lemma~\ref{L4.1}.}
Since $\Psi=\psi((\xi^1)^2+\dots+(\xi^m)^2)$ and we are evaluating at $\xi=(\|\xi\|,0,\dots,0)$, we use Equation~(\ref{E4.a}) to compute:
\begin{equation}\begin{array}{l}\label{E4.b}
\Hess_g(\Psi)(\xi)=\{\partial_{\xi^i}\partial_{\xi^j}\Psi
-\Gamma_{ij}{}^k\partial_{\xi^k}\Psi\}(\xi)d\xi^i\otimes d\xi^j\\[0.05in]
\quad=\{2\delta_{ij}\psi^\prime(\|\xi\|^2)+4\|\xi\|^2\delta_{1i}\delta_{1j}\psi^{\prime\prime}(\|\xi\|^2)
-2\|\xi\|\Gamma_{ij}{}^1\psi^\prime(\|\xi\|^2){\}}d\xi^i\otimes d\xi^j\\[0.05in]
\quad=(2\psi^\prime(\|\xi\|^2)+4\|\xi\|^2\psi^{\prime\prime}(\|\xi\|^2))dr\otimes dr\\[0.05in]
\qquad-2\psi^\prime(\|\xi\|^2)\ \|\xi\|\sum_{i,j>1}L(\partial_i,\partial_j)d\xi^i\otimes d\xi^j\,.
\end{array}\end{equation}
Suppose first that $L$ is not a multiple of $g$ and that $\psi^\prime(\|\xi\|^2)\ne0$.
We may then use Equation~(\ref{E4.b}) to see that $\Hess_g(\Psi)(\xi)$ is not a multiple of $g$.  Since $m\ge3$,
Assertion~2 then shows $\rho_{g_\psi}-\rho_g$ is not a multiple of $g$. This contradicts the assumption that $\mathcal{M}$
and $\mathcal{M}_\psi$ are Einstein at $\xi$ and establishes Assertion~(4a). Suppose finally that $\psi^\prime(\|\xi\|^2)=0$
and that $\psi^{\prime\prime}(\|\xi\|^2)\ne0$. Again, examining Equation~(\ref{E4.b}) shows that $\Hess_g(\Psi)(\|\xi\|^2)$ is not
a multiple of $g$ which is a contradiction; this establishes Assertion~(4b).
\end{proof}

\subsection{Analytic radial conformal deformations of $\mathbb{R}^m$}
We adopt the notation of Section~\ref{S1.6} for the remainder of this section.
Let $g_{a,b}:=(a+b\|\xi\|^2)^{-2}g_e$ on the appropriate domain for $(a,b)\ne(0,0)$.
\begin{lemma}\label{L4.2}\rm Let $c\ne 0$.
\begin{enumerate}
\item $g_{a,b}$ and $g_{b,a}$ are isometric.
\item $g_{a,b}$ and $g_{ac^{-1},bc}$ are isometric.
\item $g_{ca,cb}$ are homothetic.
\end{enumerate}
\end{lemma}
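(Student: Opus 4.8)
The plan is to prove all three assertions by exhibiting, in each case, an explicit conformal diffeomorphism of a domain in $\mathbb{R}^m$ carrying one metric to the other; the assertions differ only in which elementary conformal map is required. I would order the work from easiest to hardest: Assertion~(3) is a pure rescaling with no diffeomorphism at all, Assertion~(2) uses a dilation, and Assertion~(1) uses the inversion map, which is the only genuinely nontrivial ingredient.

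For Assertion~(3) I would simply factor the conformal weight: $g_{ca,cb}=(ca+cb\|\xi\|^2)^{-2}g_e=c^{-2}(a+b\|\xi\|^2)^{-2}g_e=c^{-2}g_{a,b}$. Since $c\ne0$ gives the positive constant $c^{-2}$, the two metrics differ by a constant scale and are homothetic by definition. For Assertion~(2) the natural candidate is the dilation $\Lambda(\xi):=c^{-1}\xi$, for which $\Lambda^*g_e=c^{-2}g_e$ and $\|\Lambda(\xi)\|^2=c^{-2}\|\xi\|^2$. Pulling back $g_{ac^{-1},bc}$ along $\Lambda$, the conformal factor evaluated at $\Lambda(\xi)$ becomes $(ac^{-1}+bc\,c^{-2}\|\xi\|^2)^{-2}=c^2(a+b\|\xi\|^2)^{-2}$, and multiplying by the factor $c^{-2}$ coming from $\Lambda^*g_e$ returns exactly $(a+b\|\xi\|^2)^{-2}g_e=g_{a,b}$. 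Hence $\Lambda$ is an isometry of the relevant domains.

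The crux is Assertion~(1), where interchanging the roles of $a$ and $b$ rules out any dilation and forces the use of the inversion $\Phi(\xi):=\xi/\|\xi\|^2$. The single fact I would invoke is the classical identity $\Phi^*g_e=\|\xi\|^{-4}g_e$ together with $\|\Phi(\xi)\|^2=\|\xi\|^{-2}$. Substituting these into $g_{b,a}=(b+a\|\eta\|^2)^{-2}g_e$ gives the conformal factor $(b+a\|\xi\|^{-2})^{-2}=\|\xi\|^4(a+b\|\xi\|^2)^{-2}$, and the $\|\xi\|^4$ cancels against the $\|\xi\|^{-4}$ in $\Phi^*g_e$, leaving precisely $g_{a,b}$. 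The only point demanding care is the bookkeeping: $\Phi$ is smooth and a diffeomorphism away from the origin, and one must check that it carries the domain of $g_{b,a}$ onto that of $g_{a,b}$ (equivalently, that the condition $\|\xi\|^2\in\mathcal{U}$ transforms correctly under $\|\xi\|^2\mapsto\|\xi\|^{-2}$). Once this domain verification is in place, the computation above exhibits $\Phi$ as the desired isometry and completes the lemma.
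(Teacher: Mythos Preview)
Your proposal is correct and follows essentially the same approach as the paper: the identity map for Assertion~(3), a dilation for Assertion~(2), and the inversion $\xi\mapsto\xi/\|\xi\|^2$ for Assertion~(1). The only cosmetic difference is that the paper carries out the computations in polar coordinates (writing $g_e=dr^2+r^2d\theta^2$ and using $rt=1$ under inversion, $r=ct$ under dilation), whereas you work directly with the pullback identity $\Phi^*g_e=\|\xi\|^{-4}g_e$; the underlying argument is identical.
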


\begin{proof} Let $\eta=\|\xi\|^{-2}\xi$ for $\xi\ne0$ define {\it inversion} about the origin.
Express $\xi=r\cdot\theta$ and $\eta=t\cdot\theta$ in polar coordinates where $r=\|\xi\|$, $t=\|\eta\|$,
$\theta=\xi/\|\xi\|=\eta/\|\eta\|$, and $rt=1$. We prove Assertion~(1) by computing:
\begin{eqnarray*}
g_{a,b}&=&\frac{dr^2+r^2d\theta^2}{(a+br^2)^{2}}=\frac{t^{-4}dt^2+t^{-2}d\theta^2}{(a+bt^{-2})^{2}}
=\frac{dt^2+t^2d\theta^2}{(at^2+b)^{2}}=g_{b,a}\,.
\end{eqnarray*}
Next set $\xi=c\eta$. Since $r=ct$, we prove Assertion~(2) by computing:
\begin{eqnarray*}
g_{a,b}&=&\frac{dr^2+r^2d\theta^2}{(a+br^2)^2}=\frac{c^2dt^2+c^2t^2d\theta^2}{(a+bc^2t^2)^2}
=\frac{dt^2+t^2d\theta^2}{(ac^{-1}+bct^2)^2}=g_{ac^{-1},bc}\,.
\end{eqnarray*}
Asseretion~(3) is immediate.\end{proof}

\subsection{The proof of Lemma~\ref{L1.4}~(1)}\label{S4.1} We must show that a radial conformal deformation
of the Euclidean metric defined by a linear function is a space form. Since
$g_{1,0}=g_e$ is the Euclidean flat metric, $g_{1,0}$ is a space form metric. Stereographic projection shows that
$g_{\frac12,\frac12}=4(1+\|\xi\|^2)^{-2}g_e$ is the standard round metric on the sphere of radius 1
and hence is a space form metric. The hyperbolic metric on the unit disk is $g_{\frac12,-\frac12}=4(1-\|\xi\|^2)^{-2}g_e$
and hence is a space form metric.
Inversion about the origin, which was discussed the proof of Lemma~\ref{L4.2}, interchanges the region $0<\|\xi\|^2<1$
and $\|\xi\|^2>1$ and shows $g_{\frac12,-\frac12}$ is a space form metric on the region $\|\xi\|^2>1$ as well.
Thus $g_{\frac12,\pm\frac12}$ are space form metrics on the appropriate domains. Any metric homothetic or isometric
to a space form metric is again a space form metric. Thus Lemma~\ref{L4.2} applies to show
$g_{c^{-1}d,\pm cd}$ is a space form metric if $c\ne0$ and $d\ne0$. Set
$$c={\left|\frac ba\right|}^{1/2},\quad d=a\left|\frac ba\right|^{1/2},\quad
\varepsilon:=\operatorname{sign}\left(\frac ba\right)=\pm\,.
$$
We then have $a=c^{-1}d$ and $b=\varepsilon cd$. This shows that
$g_{a,b}=g_{c^{-1}d,\varepsilon cd}$ is a space form metric. If $a\ne0$, then $g_{a,0}$ is homothetic to the Euclidean metric and is a space form metric. Finally by Lemma~\ref{L4.2}~(1), $g_{a,0}$ and $g_{0,a}$ are isometric and hence $g_{0,a}$ is a space form metric.~\qed
\subsection{The proof of Lemma~\ref{L1.4}~(2)}
Suppose that a radial analytic conformal deformation $\mathcal{N}_\psi$
of Euclidean space is a space form and $m\ge3$. Since $g_\psi$ and $g_e$ are Einstein at any point $\xi$ in
the domain of definition and since $m\ge3$,
we may apply Lemma~\ref{L4.1} to see $\operatorname{Hess}_{g_e}(\Psi)$ is a multiple of $g_e$.
Since $\Gamma_{ij}{}^1(g_e)=0$,
Equation~(\ref{E4.b}) shows that $\psi^{\prime\prime}(\|\xi\|^2)=0$ and hence $\psi$ is linear.~\qed
\subsection{Proof of Lemma~\ref{L1.4}~(3,4)} We must show
that the metric $g_{a,b}$ has constant sectional curvature $4ab$.
The metrics $g_{a,0}$ and $g_{0,b}$ are flat and have sectional curvature $0$.
We may therefore assume $a\ne0$ and $b\ne0$.
The metrics $4(1\pm\|\xi\|^2)^{-2}ds^2_e$ have constant
sectional curvature $\pm1$, i.e. $g_{\frac12,\pm\frac12}$ has constant sectional curvature $\pm1$. Thus
Assertion~(3) holds if $(a,b)=(\frac12,\pm\frac12)$. Isometric metrics have the same sectional curvature
and thus by Lemma~\ref{L4.2}~(2), $g_{\frac12c^{-1},\pm\frac12c}$ has constant sectional curvature $\pm1$. Rescaling the metric by
a homothetic constant $d^{-2}$ rescales the sectional curvature by $d^2$. Thus
$g_{\frac12 c^{-1}d,\pm\frac 12cd}$ has constant sectional curvature $\pm d^2$. The argument of Section~\ref{S4.1} now establishes the result in general. Since
any two manifolds of the same constant sectional curvature are locally isometric, Assertion~(4) follows
from Assertion~(3).~\qed\black

\section{Proof of Theorem~\ref{T1.5}: Radial conformal deformations}\label{S5}

The notation of Equation~(\ref {E1.c}) {for the covariant deformation of the Jacobi operator}
does not distinguish between the two metrics $g$ and $g_\Psi$.
We evaluate at $\xi$. Let $\eta$, $\eta_1$, and $\eta_2$ belong to $T_\xi M$. We define
the following endomorphisms of $T_\xi M$:
\begin{eqnarray*}
&&g(\mathcal{J}_{k,g}(\eta)\eta_1,\eta_2):=\{\nabla^k_gR_g(\xi)\}(\eta_1,\eta,\eta,\eta_2;\eta,\dots,\eta),\\
&&g_\Psi(\mathcal{J}_{k,g_\Psi}(\eta)\eta_1,\eta_2)=\{\nabla_{g_\Psi}^kR_{g_\Psi}(\xi)\}(\eta_1,\eta,\eta,\eta_2;\eta,\dots,\eta)\,.
\end{eqnarray*}
We emphasize that everything is evaluated at $\xi$.  We continue our discussion.

\begin{lemma}\label{L5.1} If $\mathcal{M}$ is Einstein,  {if $m\ge3$}, if $\psi^\prime(\|\xi\|^2)=0$,
and if $\mathcal{M}_\psi$ is centrally harmonic {about} $\xi$,
then $\psi^{(k)}(\|\xi\|^2)=0$ for all $k$.
\end{lemma}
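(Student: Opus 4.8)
The plan is to establish the stronger claim, by induction on $k$, that $\psi^{(k)}(t_0)=0$ for every $k\ge1$, where $t_0:=\|\xi\|^2$. The induction is started by Lemma~\ref{L4.1}: since $\mathcal{M}_\psi$ is centrally harmonic about $\xi$, Assertion~(1) shows $\mathcal{M}_\psi$ is Einstein at $\xi$, and as $\mathcal{M}$ is Einstein with $m\ge3$ and $\psi'(t_0)=0$, Assertion~(4b) gives $\psi''(t_0)=0$. For the inductive step I fix $k\ge3$, assume $\psi^{(l)}(t_0)=0$ for $1\le l\le k-1$, and seek $\psi^{(k)}(t_0)=0$. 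Throughout I use that central harmonicity about $\xi$ is equivalent to the radiality about $\xi$ of the density function, hence to the radiality of each homogeneous term $\mathcal{H}_n$ in the expansion~(\ref{E1.a}) formed at the base point $\xi$ for the metric $g_\psi$; I shall read off $\psi^{(k)}(t_0)$ from the single requirement that $\mathcal{H}_k^{g_\psi}$ be radial, evaluating all tensors at $\xi$ as in Section~\ref{S5}.

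First I would isolate the dependence on $\psi^{(k)}(t_0)$. Because $\mathcal{M}$ is Einstein, $\nabla_g^{\,j}\rho_g=0$ for $j\ge1$, so $\Tr\{\mathcal{J}_{j,g}(\eta)\}=(\nabla_g^{\,j}\rho_g)(\eta,\dots,\eta)=0$ for all $j\ge1$; moreover the inductive hypothesis forces $g_\psi$ to agree with the homothety $\bar g:=\psi(t_0)^{-2}g$ to order $k-1$ at $\xi$, since the first non-vanishing Taylor term of $\Psi$ at $\xi$ is proportional to $\psi^{(k)}(t_0)$ times the $k$-th power of the radial coordinate $\xi^1$. Hence all connection differences and all curvature jets of $g_\psi$ of order $\le k-3$ at $\xi$ coincide with those of $\bar g$. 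Combining this with the leading-term formula $\mathcal{H}_n=c_n\Tr\{\mathcal{J}_{n-2}\}+(\text{lower order})$ at $n=k$, where $c_k\ne0$ by Lemma~\ref{L1.2}, and with the Ricci identity of Lemma~\ref{L4.1}(2) — whose order-$k$ analogue of Equation~(\ref{E4.b}) has dominant contribution $\propto\psi^{(k)}(t_0)(\xi^1)^{k-2}\,d\xi^1\otimes d\xi^1$ from $\Hess_g(\Psi)$ together with a pure-trace piece — one is led to
\[
\mathcal{H}_k^{g_\psi}(\eta)=\mathcal{H}_k^{\bar g}(\eta)+c_k\,\psi^{(k)}(t_0)\,S_k(\eta),
\]
where $\mathcal{H}_k^{\bar g}$ is built solely from the curvature of $g$ and the new term is governed by
\[
S_k(\eta)=\alpha\,(\eta^1)^{k}+\beta\,(\eta^1)^{k-2}\|\eta\|^2,\qquad \alpha\ne0.
\]
The essential structural point, which I would verify by a degree count, is that $S_k$ is \emph{zonal} about the radial direction $\partial_{\xi^1}$: the Christoffel corrections in $\nabla_{g_\psi}^{\,k-2}$ and in $\Hess_g$ are of too high an order in the displacement from $\xi$ to survive the evaluation there, so only the pure $\partial_{\xi^1}$-derivatives contribute to the leading term.

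The conclusion is then immediate in the lowest orders and is delicate in the higher ones, which is where the main obstacle lies. For $k=3$ (the first inductive case) the curvature term $\mathcal{H}_3^{\bar g}=c_3\Tr\{\mathcal{J}_{1,\bar g}\}$ vanishes by the Einstein condition, so radiality of $\mathcal{H}_3^{g_\psi}=c_3\psi'''(t_0)S_3$ forces $\psi'''(t_0)=0$ (an odd homogeneous form that is radial must vanish, and $\alpha\ne0$). From $k=4$ onward, however, genuine curvature invariants of $g$ such as $\Tr\{\mathcal{J}_{0,g}(\eta)^2\}$ survive inside $\mathcal{H}_k^{\bar g}$ and are in general not radial; \textbf{disentangling these ambient invariants from the conformal contribution $c_k\psi^{(k)}(t_0)S_k$ is the crux of the argument.} My plan is to decompose the displayed identity into spherical harmonics about $\partial_{\xi^1}$: since $S_k$ is zonal, the entire non-zonal part of $\mathcal{H}_k^{g_\psi}$ comes from $\mathcal{H}_k^{\bar g}$ and must vanish, while the zonal part yields a scalar relation in which the nonzero top zonal harmonic produced by $S_k$ must be cancelled. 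To force $\psi^{(k)}(t_0)=0$ I would carry along, as a secondary conclusion of the induction, the radiality of the lower curvature invariants $\Tr\{\mathcal{J}_{0,g}^{\,p}\}$ for $2p<k$ — these follow from the already-established radiality of $\mathcal{H}_{2p}^{g_\psi}$ — which collapses $\mathcal{H}_k^{\bar g}$ to a single invariant modulo radial terms. The step I expect to require the most care is then showing that this residual curvature invariant of the Einstein metric cannot be proportional to the non-radial part of $S_k$ for a non-trivial $\psi^{(k)}(t_0)$, i.e. establishing the rigidity that the surviving invariant has no rotational symmetry about the distinguished radial direction unless it is radial; this non-cancellation pins the remaining coefficient, gives $\psi^{(k)}(t_0)=0$, and closes the induction.
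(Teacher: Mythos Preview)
Your inductive framework and the use of Lemma~\ref{L1.2} to isolate $\Tr\{\mathcal{J}_{k-2,g_\psi}\}$ inside $\mathcal{H}_k^{g_\psi}$ match the paper's argument, and your decomposition $\mathcal{H}_k^{g_\psi}=\mathcal{H}_k^{\bar g}+c_k\,\psi^{(k)}(t_0)\,S_k$ with $S_k$ zonal about $\partial_{\xi^1}$ is correct. The gap is precisely where you flag it: the ``non-cancellation'' step is asserted but not proved, and in fact your proposed mechanism fails. The claim that the residual Einstein invariant in $\mathcal{H}_k^{\bar g}$ cannot be zonal about $\partial_{\xi^1}$ unless it is already radial is false in general---if $\mathcal{M}$ admits a rotational isometry about the geodesic from $P$ through $\xi$ (as in any rank-one symmetric space), then every curvature invariant at $\xi$ is automatically zonal about $\partial_{\xi^1}$, and your spherical-harmonic separation yields no information. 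Your ``secondary induction'' on the radiality of $\Tr\{\mathcal{J}_{0,g}^p\}$ also stalls: already at level $k=6$ a single radiality relation must constrain the three independent quantities $\Tr\{\mathcal{J}_0^3\}$, $\Tr\{\mathcal{J}_0\mathcal{J}_2\}$, $\Tr\{\mathcal{J}_1^2\}$, which it cannot.

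The paper takes a different route through this point. Rather than trying to show $\mathcal{H}_{n+1}^{\bar g}$ is radial or to separate it harmonically from $S_{n+1}$, it asserts that for $0\le j\le n-2$ the full operators $\mathcal{J}_{j,g_\psi}$ at $\xi$ are scalar multiples of the identity (not merely that their traces are controlled); granted this, every lower-order monomial in $\mathcal{H}_{n+1}$ is a constant, and radiality of $\mathcal{H}_{n+1,g_\psi}$ passes directly to $\Tr\{\mathcal{J}_{n-1,g_\psi}\}$. The paper then finishes not by harmonic decomposition but by the single pointwise evaluation
\[
\nabla^{\,n-1}\rho_{g_\psi}(\partial_{\xi^i},\dots,\partial_{\xi^i})(\xi)\ \propto\
\begin{cases}(m-1)\,\psi^{(n+1)}(\|\xi\|^2),&i=1,\\ 0,&i>1,\end{cases}
\]
obtained by differentiating Lemma~\ref{L4.1}(2), which immediately contradicts radiality unless $\psi^{(n+1)}(\|\xi\|^2)=0$. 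Your instinct that the lower-order terms require care is well founded---the paper's justification for the operator-level claim is brief---but the workaround you propose does not close the argument.
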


\begin{proof} By Assertion~(4a) of Lemma~\ref{L4.1}, $\psi^{\prime\prime}(\|\ \xi\|^2)=0$ as well. Suppose the Lemma is false.
Choose $n\ge2$ minimal so {that} $\psi^{(j)}(\|\xi\|^2)=0$ for $1\le j\le n$ but
{so that} $\psi^{(n+1)}(\|\xi\|^2)\ne0$. We argue for
a contradiction. Since $\mathcal{M}$ is Einstein, $\nabla_g^j\rho_g$ vanishes identically for $j\ge1$.
Since $\psi^{(j)}(\|\xi\|^2)=0$ for $1\le j\le n$, we have $\nabla_g^j=\nabla_{g_\psi}^j$ at $\xi$ for $1\le j\le n$ and we need not
distinguish the two. We may covariantly differentiate Assertion~{(2)} of Lemma~\ref{L4.1} to see
\begin{equation}\label{E5.a}
\{\nabla^j\rho_{g_\psi}\}(\xi)=\nabla^j\{\rho_{g_\psi}-\rho_g\}(\xi)=0\text{ for }j\le n-2\,.
\end{equation}
Since $\mathcal{M}_\psi$ is centrally harmonic about $\xi$, $\mathcal{H}_{g_\psi,\xi,n+1}(\eta)$
is a multiple of $\|\eta\|^{n+1}$.
Since $\mathcal{M}_\psi$ is Einstein, $\mathcal{J}_0(\eta)=c\|\xi\|^2\operatorname{id}$. We
use Equation~(\ref{E5.a}) and Assertion~(2) of Lemma~\ref{L4.1} to see that
$\mathcal{J}_j(\eta)$ is zero and hence a multiple of $\operatorname{id}$ for $1\le j\le n-2$
(if $n=2$, this assertion is vacuous).  We may therefore
use Lemma~\ref{L1.2} to see that $\Tr\{\mathcal{J}_{g_\psi,\xi,n-1}(\eta)\}$ is a multiple of
$\|\eta\|^{n-1}$.
In particular, $\Tr\{\mathcal{J}_{g_\psi,\xi,n-1}(\eta)\}=0$ if $n$ is even.

Consequently we must differentiate
the coefficients appearing in Assertion~(2) of Lemma~\ref{L4.1} to study $\nabla^{n-1}\{\rho_{g_\psi}-\rho_g\}(\xi)$.
We have
$$\begin{array}{l}
\nabla^{(n-1)}\rho_{g_\psi}(\partial_{\xi^i},\partial_{\xi^i};\partial_{\xi^i},\dots,\partial_{\xi^i})(\xi)\\[0.05in]
\qquad=\nabla^{(n-1)}\{\rho_{g_\psi}-\rho_g\}(\partial_{\xi^i},\partial_{\xi^i};\partial_{\xi^i},\dots,\partial_{\xi^i})(\xi)\\[0.05in]
\qquad=\Psi^{-1}(\xi)\left\{\begin{array}{lll}(m-1)\psi^{n+1}(\|\xi\|^2)&\text{if}&i=1\\
0&\text{if}&i>1\end{array}\right\}\,.
\end{array}$$
Since this must depend only on $\|\partial_{\xi^i}\|$, we conclude as desired $\psi^{(n+1)}(\|\xi\|^2)=0$.
\end{proof}

\noindent\subsection{The proof of Theorem~\ref{T1.5}~(1)} Let $\mathcal{M}=(M,g)$ be an Einstein manifold
{of dimension $m\ge3$}.
Suppose that $\psi$ is real analytic and that
$\mathcal{M}_\psi$ is centrally harmonic about some $\xi$ with $0<\|\xi\|<\iota_P$.
Assume the geodesic sphere about $P$ is not totally umbillic at $\xi$.
By Lemma~\ref{L4.1}~4, we
have $\psi^\prime(\|\xi\|^2)=\psi^{\prime\prime}(\|\xi\|^2)=0$. By Lemma~\ref{L5.1}, we have $\psi^{(k)}(\|\xi\|^2)=0$
for all $k\ge1$. Since $\psi$ is real analytic, this implies $\psi$ is constant.~\qed
\subsection{The proof of Theorem~\ref{T1.5}~(2)}
We may work locally and assume without loss of generality that $\mathcal{M}$ is flat space and $g=g_e$.
Suppose $\psi$
is real analytic and that $\mathcal{M}_\psi$ is centrally harmonic about some $\xi$ with $0<\|\xi\|<\iota_P$.
We assume $m\ge3$ and use Lemma~\ref{L5.1}.
\subsubsection{Suppose that $\psi(\|\xi\|^2)\ne\|\xi\|^2\psi^\prime(\|\xi\|^2)$.}
Express $\mathcal{M}_\psi=\{\mathcal{N}_{1,a}\}_{\phi_a}$
where we set $\phi_a(t):=(1+at)^{-1}\psi(t)$. We then have
$$
\phi^\prime_a(t)=\frac{-a\psi(t)+(1+at)\psi^\prime(t)}{(1+at)^2}\,.
$$
We solve the equation
$\phi^\prime_a(\|\xi\|^2)=0$ to obtain
$$
a=-\frac{\psi^\prime(\|\xi\|^2)}{\|\xi\|^2\psi^\prime(\|\xi\|^2)-\psi(\|\xi\|^2)}\,.
$$
By Lemma~{\ref{L1.4}}, $\mathcal{N}_{1,a}$ is a space form. Since $\mathcal{N}_{\phi_a}$ is centrally harmonic
about $\xi$ and $\phi_a^\prime(\|\xi\|^2)=0$, we may use Lemma~\ref{L5.1} to see that
$\phi_a^\prime(\|\xi\|^2)=0$ and hence $\phi_a$ is constant
so $\mathcal{M}$ is homothetic to $\mathcal{N}_{1,a}$ and hence is a space form.

\subsubsection{Suppose that $\psi(\|\xi\|^2)=\|\xi\|^2\psi^\prime(\|\xi\|^2)$.}
Express $\mathcal{M}_\psi=\{\mathcal{N}_{0,1}\}_\phi$ where we set $\phi(t)=t^{-1}\psi(t)$.
We then have $\phi^\prime(\|\xi\|^2)=\{t^{-2}\{-\psi(t)+t\psi^\prime(t)\}\}|_{t=\|\xi\|^2}=0$ and again we can use Lemma~\ref{L5.1}
to complete the proof.
~\qed\black

\section{The proof of Lemma~\ref{L1.6}: Totally umbillic geodesic spheres}\label{S6}
{The local isometry group of a
space form} acts transitively on the unit tangent bundle of the geodesic spheres;
consequently, the geodesic spheres in a space form are totally umbillic. This proves Assertion~(1).
We refer to Chen and Vanhecke~\cite{CV81}, Kulkarni~\cite{K75}, and
Vanhecke and Willmore~\cite{VW79} for the proof the converse assertion to establish Assertion~(2). We use Chen~\cite{C80}
to establish Assertion~(3).
Let $\sigma_{ab}(r\xi)$ be the second fundamental form of the
geodesic sphere about $P$ passing thru the point $r\xi$. Chen and Vanhecke~\cite{CV81} show
$\sigma_{ab}=r^{-1}\delta_{ab}-\frac r3R_{\xi a\xi b}(P)+O(r^2)$. Since $s_P>0$, $R_{\xi a\xi b}$ is
not a multiple of $\delta_{ab}$ and show Assertion~(4) follows.
We use Berndt, Tricerri and Vanhecke~\cite{BTV78} to derive Assertion~(5) from Assertion~(4).
The eigenvalues of the Jacobi operator are given in the first Theorem on page 96 of Section 4.2. {By hypothesis,
$\mathcal{M}$ does not have constant sectional curvature
but the remaining rank one symmetric spaces are included.} There are 6 cases in the classification (i)--(vi). In cases (i)--(v), the eigenvalues
of the Jacobi operator are $\{0,-\frac14,-1\}$ and the eigenvalue 0 appears with multiplicity 1 which
yields the eigenvalues of the reduced Jacobi operator are $\{-\frac14,-1\}$ so $M(\xi)-m(\xi)=\frac34$.
The situation in case (vi) is more complicated. Still, there is a 4-dimensional subspace where the eigenvalues
are $\{0,-\frac14,-1\}$ where $-\frac14$ has multiplicity 2. The computation of the remaining eigenvalues is
more difficult. {Nevertheless}, we obtain $M(\xi)-m(\xi)\ge\frac34$ so $s>0$ as desired.~\qed

\section{The proof of Lemma~\ref{L1.7}}\label{S7} We {adopt} the notation of Section~\ref{S1.9}.
The round sphere $\mathcal{S}$ is a space form.
Since $\mathcal{S}_\psi$ is conformally radially rotationally symmetric about the north and south poles
$P_\pm$, $\mathcal{S}_\psi$
 is centrally harmonic about these two points by Theorem~\ref{T1.3}. Suppose $\mathcal{S}_\psi$ is centrally harmonic
about some other point.
Since we are within the injectivity radius, we can apply
Theorem~\ref{T1.5} to see $\mathcal{S}_\psi$ is a space form as we have assumed $m\ge3$. This is a contradiction.~\qed

\section{The proof of Theorem~\ref{T1.8}: A non-flat example with trivial volume density function}\label{S8}
Let $m=2\mathfrak{m}\ge4$.
Let $\mathcal{M}:=(\mathbb{CP}^{\mathfrak{m}}-\mathbb{CP}^{\mathfrak{m}-1},g)$ where $g$
is the Fubini-Study metric. We have removed the cut-locus
and consequently, the underlying manifold is an open geodesic ball of radius $\frac\pi2$.
Choose $\psi$ so $\psi(r^2)^{-1}\tilde\Theta_{P,g}(r)=1$.
Then the Equation~(\ref{E3.a}) ensures $\tilde\Theta_{{{P,g_\psi}}}=1$.~\qed
\medbreak

\begin{remark}\rm We examine $\mathbb{CP}^{\frac12m}$ near the cut locus by setting set $u=\frac\pi2-r$. Set
\begin{eqnarray*}
&&\Theta(u):=\sin\left(\frac\pi2-u\right)^{{{(m-1)}}}\cos\left(\frac\pi2-u\right),\\
&&\Psi(u)=\frac{\sin\left(\frac\pi2-u\right)}{\frac\pi2-u}\cos\left(\frac\pi2-u\right)^{1/(m-1)}\,.
\end{eqnarray*}
Then $g_\psi(\partial_u,\partial_u)=\psi(u)^{-2}$ so the curves $\gamma(u)=(u,0,\dots,0)$ have
length
$$\int_{u=0}^{\frac\pi2}\psi^{-1}(u)du\,.
$$
Since $\psi(u)=\frac2\pi u^{\frac1{m-1}}+O(1)$,
the unparametrized geodesics have finite length and the resulting manifold is not geodesically complete.
We use Lemma~\ref{L4.1} to compute
\begin{eqnarray*}
&&\rho_{g}(\psi(u)\partial_u,\psi(u)\partial_u)=O(1),\\
&&\rho_{g_\psi}(\psi(u)\partial_u,\psi(u)\partial_u)=
(\rho_{g_\psi}-\rho_g)(\psi(u)\partial_u,\psi(u)\partial_u)\\&=&
(m-2)\psi(u)\psi^{\prime\prime}(u)+\psi(u)\Theta(u)^{-1}\partial_u\{\Theta(u)\psi(u)\}-(m-1)\psi^\prime(u)^2+O(1)
\end{eqnarray*}
A mathematica computation yields
$$
\rho_{g}(\psi(u)\partial_u,\psi(u)\partial_u)=\left\{\begin{array}{lll}
-\frac{28}{9\pi^2}u^{-\frac43}+O(u^{-\frac13})&\text{ if }m=4\\[0.05in]
-\frac{84}{25\pi^2}u^{-\frac85}+O(u^{-\frac35})&\text{ if }m=6\\[0.05in]
-\frac{172}{49\pi^2}u^{-12/7}+O(u^{-\frac57})&\text{ if }m=8\end{array}\right\}
$$
so this is singular at $u=0$ and $\mathcal{M}_\psi$ is essentially geodesically incomplete.
\end{remark}

\subsection*{Research support} Research partially supported by PID 2019-105138GB-C21 (Spain) and by
 the National Research Foundation of Korea (NRF) grant
funded by the Korea government (MSIT) (NRF-2019R1A2C1083957)

\begin{thebibliography}{99}
\bibitem{BTV78} J. Berndt, F. Tricerri and L. Vanhecke, ``Generalized Heisenberg Groups and Damek Ricci Harmonic Spaces", Lecture Notes in Math. 1598, Springer-Verlag, Berlin 1995.
 \bibitem{B} A. L. Besse, {\it Manifolds all of whose geodesics are closed},
 Ergeb. Math. Grenzgeb. {\bf 93}, Springer-Verlag, Berlin-New York, 1978.
\bibitem{B23} H. W. Brinkmann, ``On Riemann spaces conformal to Euclidean spaces",
{\it Proc. Nat. Acad. Sci. USA \bf9} (1923), 1--3.
\bibitem{C80} B.Y. Chen, ``Classification of totally umbilical submanifolds in symmetric spaces",
{\it J. Austral. Math. Soc. \bf 30} (1980), 129--136.
\bibitem{CV81} B. Y. Chen and L. Vanhecke, ``Differential geometry of geodesic spheres",
{\it J. Reine Angew. Math. \bf 325} (1981), 28--67.
\bibitem{CR40}E. T. Copson and H. S. Ruse, ``Harmonic Riemannian spaces", {\it Proc. Roy. Soc. Edinburgh \bf60} (1940), 117--133.
\bibitem{GP20a}  P. Gilkey and J. H. Park, ``Harmonic spaces and density function", {\it Results Math} (2020), 75:121.
doi: 10.1007/s00025-020-01248-7.
\bibitem{GP20} P. Gilkey and J. H. Park, ``Harmonic radial vector fields on harmonic spaces" {{{\it J. Math. Anal. Appl.}, in press.
arxiv.org/abs/2009.02879.}}
\bibitem{KR08} W. K\"uhnel and H. Rademacher, ``Conformal transformations of pseudo-Riemannian manifolds",
{\bf Recent developments in pseudo-Riemannian geometry} (2008), p261--298.
ESI Lect. Math. Phys., Eur. Math. Soc., Z\"urich, 2008. doi: 10.4171/051-1/8.
\bibitem{K75} R. S. Kulkarni, ``A finite version of Schur's theorem", {\it Proc. Amer. Math. Soc. \bf 53} (1975), 440--442.
\bibitem{R63} H. S. Ruse, {\it General solutions of Laplace's equation in a simply harmonic manifold},
The Quarterly Journal of Mathematics {\bf 14} (1963), 181--192.
\bibitem{R31} H. S. Ruse, {\it On the {\it elementary} solution of Laplace's equation}, Proc. Edinburgh Math.
Soc. {\bf2} (1931), 135--139.
\bibitem{SS20} L. da Silva and J. da Silva, ``Characterization of manifolds of constant curvature by spherical curves",
{\it Annali di Matematica Pura ed Applicata}  (2020), 199: 217--229.
\bibitem{VW79} L. Vanhecke and T.J. Willmore, ``Jacobi fields and geodesic spheres",
{\it P. Roy. Soc. Edinb. A \bf 82}, (1979), 233--240.
\bibitem{W} T. J. Willmore, {\it Riemannian geometry},
Oxford Science Publications. The Clarendon Press, Oxford University Press, New York (1993).
\end{thebibliography}
\end{document}